\theoremstyle{plain}
\newtheorem{X}{X}[section]
\newtheorem{theorem}[X]{Theorem}
\newtheorem{lemma}[X]{Lemma}
\theoremstyle{definition}
\newtheorem{definition}[X]{Definition}
\newtheorem{remark}[X]{Remark}
\begin{document}

\title{Zeta functions, Grothendieck groups, and the Witt ring}
%\date{\today}
\address{Department of Mathematics, University of Maryland, College Park, MD 20742 USA.} 
\email{atma@math.umd.edu}
\author{Niranjan Ramachandran}
%\thanks{Partly supported by Graduate Research Board (UMD)}
\dedicatory{Dedicated to S.~Lichtenbaum on the occasion of his 75th birthday.}
\maketitle

\begin{flushright}
ÒWe shall not cease from exploration\\
And the end of all our exploring\\
Will be to arrive where we started\\
And know the place for the first time.\\
-- T.~S.~Eliot, {\it Four Quartets}
\end{flushright}

Zeta functions play a primordial role in arithmetic geometry. The aim of this paper is to provide some motivation to view zeta functions of varieties over finite fields as elements of the (big) Witt ring $W(\mathbb Z)$ of $\mathbb Z$.  Our main inspirations are 

\begin{itemize}
\item Steve Lichtenbaum's philosophy \cite{MR0406981, lichtenbaum,  MR2562461} that special values of arithmetic zeta functions and motivic L-functions are given by suitable Euler characteristics.

\item  Kazuya Kato's idea of zeta elements; Kato-Saito-Kurokawa \cite{kks} titled a chapter "$\zeta$". They say "\emph{We dropped the word "functions" because we feel more and more as we study $\zeta$ functions that $\zeta$ functions are more than just functions.}". 
 
\item The suggestion of Minhyong Kim \begin{quote} In brief, the current view is that the Iwasawa polynomial=p-adic L-function should be viewed as a path in K-theory space; see \href{http://mathoverflow.net/questions/37374/what-is-a-path-in-k-theory-space} {MO.37374}.\end{quote} 
and Steve Mitchell \cite{mitchell} \begin{quote} It is tempting to think of $KR$ as a sort of homotopical $L$-function, with $L_{K(1)} KR$ as its analytic continuation and with functional equation given by some kind of Artin-Verdier-Brown-Comenetz duality. (Although in terms of the generalized Lichtenbaum conjecture on values of $\ell$-adic $L$-functions at integer points, the values at negative integers are related to {\emph{positive}} homotopy groups of $L_{K(1)}KR$, while the values at positive integers are related to the negative homotopy groups! \end{quote} that the algebraic K-theory spectrum itself should be considered as a zeta function. 

\item M.~Kapranov's \cite{kapranov} motivic zeta function with coefficients in the Grothendieck ring of varieties and the related notion of motivic measures.

\end{itemize} 

One basic reason for an Euler-characteristic description of the special values of zeta functions is that the zeta function itself is an Euler characteristic.

There is almost nothing original in this paper. Much of this is surely known to the experts. However, except for a passing remark in  \cite{MR0364425, MR1021547, lenstra} mentioning (i) of Theorem \ref{main}, the close relations between zeta and the Witt ring do not seem to be documented  in the literature\footnote{After this article was posted to the arXiv, Antoine Chambert-Loir kindly alerted me to \cite{niko2} where Theorem \ref{main} and more is proved. See also Remark \ref{niko}.}; this provides our excuse to write this paper. Still missing is a formulation of the functional equation for the zeta function in terms of the Witt ring. We shall explore the connections with homotopy in future work.\medskip 

After preliminary definitions and a review (in the first section) of the basic structures  (such as Frobenius $F_m$ and Verschiebung $V_m$) of the Witt ring $W(R)$ of a ring $R$, we present our main results. In the second section, we show 
\begin{itemize}
\item  for varieties $X$ and $Y$ over a finite field $k= \mathbb F_q$, the zeta function $Z(X \times Y, t)$ of $X \times Y$ is the Witt product of $Z(X,t) $ and $Z(Y,t)$ in $W(\mathbb Z)$. This means that $Z(X,t)$ is a motivic measure on the Grothendieck ring of varieties over $k$. 

\item the zeta function of $X$ over $k'= \mathbb F_{q^m}$ is $F_mZ(X,t)$. 

\item If $X'$ is a variety over $k'= \mathbb F_{q^m}$ and $X$ is its Weil restriction of scalars from $k'$ to $k$, then $Z(X, t)$ contains  $V_mZ(X',t)$  in a precise sense.

\item a multiplicativity property $\zeta_P(X \times Y) = \zeta_P(X,t) *\zeta_P(Y,t)$ via Witt rings for the generating function $u_P(X,t)$ for the Poincar\'e polynomials of symmetric products of a space $X$ using a formula of Macdonald \cite{MR0143204}.  (This does not seem to have been known before, at least explicitly.)
\end{itemize}  
 We end with some interesting appearances of Witt ring in the context of  Hilbert schemes and other moduli spaces that naturally generalize the symmetric products of a quasi-projective scheme.  Remembering the result of G.~Almkvist \cite{almkvist, grayson} (see Remark \ref{almkvist}) that the Witt ring encodes the characteristic polynomial of endomorphisms, it seems now, in retrospect, that the appearance of Witt ring in zeta functions is not just unsurprising nor inevitable but rather primordial!

\section{Preliminaries}

Let $\mathbb N$ denote the set of positive integers. We write $a+_Wb$ or $^W\sum a_i$ to indicate addition in the Witt ring $W(R)$.  For any field $F$, let $Sch_F$ be the category of schemes of finite type over Spec~$F$.  A variety over $F$ is an integral scheme of finite type over ${\rm Spec}~F$. 

\subsection*{The big Witt ring $W(A)$}   \cite{cartier, bloch, MR2987372, MR1021547, larsh} \cite[Chap. IX \S1]{MR722608}. 

For any commutative ring $A$ with identity, the (big) Witt ring $W(A)$ is a commutative ring with identity defined as follows.  
The group $(W(A), +)$ is isomorphic to the group \begin{equation}\label{llambda} \Lambda(A):= (1 + tA[[t]], \times),\end{equation} a subgroup of the group of units $A[[t]]^{\times}$ (under multiplication of formal power series) of the ring $A[[t]]$.  The multiplication $*$ in $W(A)$ is uniquely determined by the requirement $$(1-at)^{-1} *(1-bt)^{-1} = (1-abt)^{-1} \qquad a,b \in A$$ and functoriality of $W(-)$: any homomorphism $f:A \to B$ induces a ring homomorphism $W(f): (A) \to W(B)$. The identity for addition $+_W$ is $1 = 1 + 0t + 0t^2 \cdots $. The identity for multiplication $*$ is $[1] =(1-t)^{-1}$; here $[1]\in W(A)$ is the image of $1 \in A$ under the   
the (multiplicative) Teichmuller map $$A \to W(A) \qquad a \mapsto [a] = (1-at)^{-1}.$$  
In particular, one has 
\begin{align}\label{prod}
(\prod_i (1-a_it)^{-1} )*(\prod_j(1-b_jt)^{-1}) &= (^W\sum_i[a_i]) *(^W\sum_j[b_j]) \nonumber\\
& = ^W\sum_{i,j} [a_ib_j]\nonumber\\
& = \prod_{i,j}(1-a_ib_jt)^{-1}.\end{align}
If $f: A \to B$ is injective, then so is $W(f): W(A) \to W(B)$.

\begin{remark} 
The construction of this ring structure on $\Lambda(A)$ comes from A.~Grothendieck's work \cite{MR0116023} on Chern classes and Riemann-Roch theory.  Given a vector bundle $V$ on a smooth proper variety $X$ over a field $F$, write $Ch(V)$ for its Chern character. Then,  $Ch(V \otimes V')$ of a tensor product is given by the Witt product of $Ch(V)$ and $Ch(V')$ in the Witt ring $W(A)$; here  $A$ is the Chow ring of $X$. 

There are four different possible definitions of the Witt ring corresponding to the four choices of the identity element $$(1\pm t)^{\pm1};$$the choice $(1+t)$ is used in the theory of Chern classes (and $\lambda$-rings - see below). The Witt ring is closely connected with the K-theory of endomorphisms; see Remark \ref{almkvist}. 
D. Kaledin \cite{kaledin} has recently provided a beautiful conceptual definition of the multiplication $*$ in $W(A)$ via Tate residues and algebraic K-theory. \qed
\end{remark} 
 Recall the identities (this will be useful in Lemma \ref{ghosty}) \begin{align*} -{\rm log}~(1-t) = \sum_{r \ge 1} \frac{t^r}{r}, &\qquad -{\rm log}~(1-bt) = \sum_{r \ge 1} b^r \frac{t^r}{r}\\ 
  t\frac{d}{dt}{\rm log}~(\frac{1}{1-bt}) & = \frac{bt}{1-bt} = bt + b^2t^2 + \cdots.\end{align*}   
The (functorial) ghost map $gh: W(A) \to A^{\mathbb N}$ is defined  as the composite 
\begin{align*} 
W(A)  \longrightarrow & \quad tA[[t]]\overset{\simeq}{\longrightarrow}  A^{\mathbb N}\\
P \mapsto t \frac{1}{P} \frac{dP}{ dt} & \qquad \sum b_rt^r  \mapsto  (b_1, b_2, \cdots).  \end{align*} 
The components of $gh(P)$  are the ghost coordinates $gh_n(P)$. Thus $$t \frac{1}{P} \frac{dP}{ dt} = \sum_{r >0} gh_r(P) t^r.$$It is clear that the ghost map is injective.    
As $$gh([b]) = ( b, b^2, b^3, \cdots), \quad gh_n([b]) = b^n,$$ the ghost map is a functorial ring homomorphism: $$gh: W(A) \to A^{\mathbb N}, \qquad gh([a][b]) = gh([a]). gh([b]).$$ 

If $\Psi: U \to U$ is an endomorphism of a finite-dimensional vector space $U$, then the ghost components of $Q(t) = \text{det}(1-t\Psi~| U)^{-1}$ are given by $gh_n(Q) = \text{Trace}(\Psi^n~|~U)$ by \cite[1.5.3]{deligne} \begin{equation}\label{weil153} t\frac{d}{dt} {\rm log}~(Q(t)) = \sum_{n\ge 0} \text{Trace}~(\Psi^n~|~U)t^n.\end{equation}

 Any $P(t) \in W(A)$ admits a unique product decomposition \begin{equation}\label{witt} P(t) = \prod_{n \ge 1}(1 - a_nt^n)^{-1} \qquad a_n \in A;\end{equation} the $a_n$'s are the Witt coordinates of $P$.

The Witt coordinates $a_j$ and the ghost coordinates $gh_n$ of any $P(t) \in W(A)$ are related by
\begin{equation}\label{ghost} gh_n = \sum_{d|n}~d.(a_d)^{n/d}.\end{equation} 
For instance, if $P = [b]$, we have $$a_1 =b, \qquad a_i =0~{\rm for}~i >1, \qquad gh_n = a_1^n = b^n.$$   

For every $n \in \mathbb N$, one has a  (Frobenius) ring homomorphism $$F_n: W(A) \to W(A) \qquad F_n([a]) = [a^n]$$ and an additive (Verschiebung) homomorphism $$V_n:W(A) \to W(A) \qquad V_n(P(t)) = P(t^n).$$ These satisfy ($P(t) \in W(A)$) 
 
\begin{itemize} 
\item  $F_n \circ F_m = F_{nm}$,  $V_n \circ V_m = V_{nm}$. 
\item $F_n \circ V_n =$ multiplication by $n$; if $m$ and $n$ are coprime, then $F_n \circ V_m = V_m \circ F_n$; if $A$ is a $\mathbb F_p$-algebra, then $V_p \circ F_p =$ multiplication by $p$.   
\item  One has $V_n([a]) = (1-at^n)^{-1}$, $V_n(P(t)) = P(t^n)$, 

\begin{equation}\label{freeb} F_m(P(t)) = ~^W \sum_{\zeta^m =1} P(\zeta t^{1/m}) =  \prod_{\zeta^m =1} P(\zeta t^{1/m}).\end{equation} 
\item The identity (\ref{witt}) becomes $P(t) = ^W \sum_{n\ge 1} V_n[a_n]$ where $a_n$ are the Witt coordinates of $P(t)$.
\item (effect on ghost coordinates) Write $g_i = gh_i(P)$. Then 
\begin{equation}\label{frob}
gh (F_n (P))  = (g_n, g_{2n}, g_{3n}, \cdots), \qquad gh(V_n (P)) = (0, \cdots, 0, ng_1, 0,\cdots, 0, ng_2, \cdots)\end{equation}
where $ng_j$ appears in $nj$'th component.  
\end{itemize}

As $$A[[t]] = \lim_{\leftarrow} \frac{A[t]}{(t^n)} = \lim_{\leftarrow} \frac{A[[t]]}{(t^n)},$$ writing $W_n(A)$ for the subgroup of units of $A[[t]]/{(t^{n+1})}$ with constant term one, we have $$W(A) = \lim_{\leftarrow} W_n(A);$$ the discrete topology on each $W_n(A)$ thus endows $W(A)$ with a topology.  The operations described above on the topological rings $W(A)$ can be described as follows \cite{grayson}.

\begin{enumerate} 
\item $gh_n: W(A) \to A$ is the unique additive continuous map which sends $[a]$ to $a^n$.
\item $F_n: W(A) \to W(A)$ is the unique additive continuous map which sends $[a]$ to $[a^n]$.
\item $V_n:W(A) \to W(A)$ is the unique additve continuous map which sends $[a]$ to $(1-at^n)^{-1}$.
\end{enumerate} 

\begin{remark}\label{almkvist} One way to think about the Witt ring is in terms of characteristic polynomials of endomorphisms. This point of view is due to G.~Almkvist \cite{almkvist} and D.~Grayson \cite{grayson}. For any commutative ring $A$ with unit, consider the category $\mathbb P_A$ of finitely generated projective $A$-modules; its Grothendieck group $ K_0(\mathbb P_A)$ is $K_0(A)$. The standard operations of linear algebra (tensor, symmetric, exterior products) endow $K_0(A)$ with the structure of a $\lambda$-ring; see below. Now consider the category $\text{End}_A$ whose objects are pairs $(P, f)$, where $P$ is a finitely generated projective $A$-module and $f: P\to P$ an endomorphism of $P$. The morphisms from $(P,f)$ to $(P', f')$ are given by $A$-module maps $g: P \to P'$ satisfying $gf = f'g$. An exact sequence in $\text{End}_A$ is one whose underlying sequence of $A$-modules is exact. Since the standard operations of linear algebra can be performed in $\text{End}_A$, the group  $K_0(\text{End}_A)$ is a $\lambda$-ring. The ideal $J$ generated by the idempotent $(A, 0)$ in $K_0(\text{End}_A)$ is isomorphic to $K_0(A)$ and we define $W'(A)$ to be the quotient $K_0(\text{End}_A)/{J}$. 

The map $$L: W'(A) \to \Lambda(A) = 1 +t A[[t]], \qquad (P,f) \mapsto {\rm det}~({\rm id}_P -tf)$$
is well-defined and an injective homomorphism of groups \cite{almkvist}.  The ring structure on $W'(A)$ comes from the tensor product of projective modules.  For any $c \in A$, write $(A, c)$ corresponding to the endomorphism $$A \to A \qquad a\mapsto ca.$$ As the tensor product of $(A,a)$ and $(A,b)$ is $(A, ab)$, $L$ becomes an injective  ring homomorphism (with dense image \cite{almkvist}) if we endow $\Lambda(A)$ with the Witt product above.  Thus, $W(A)$ is the natural receptacle for the characteristic polynomial of endomorphisms of finitely generated projective $A$-modules. 

On $W'(A)$, one has \cite{grayson} 
\begin{enumerate}
\item the ghost map $gh_n(P,f) = {\rm trace}(f^n| P)$.  See also (\ref{weil153}).
\item Frobenius $F_n(P,f) = (P, f^n)$.
\item Verschiebung $V_n(P,f) = (P^{\oplus n}, v_nf)$ where $v_nf$ is a companion matrix of order $n$ consisting of $1$'s along the sub-diagonal, $f$ in the top right corner and zeroes everywhere else. Alternatively, $V_n (P,f) = (P[x]/(x^n -f), v_nf)$ where $v_nf$ is the endomorphism $x$ on the module $P[x]/{(x^n -f)} \simeq P^{\oplus n}$; thus, $v_nf = x$ is an "$n$'th root of $f$" in some sense.
\end{enumerate} 
 
\end{remark}

 \subsection*{$\lambda$-rings} \cite{MR2891866, MR2252764,MR3065021} These were introduced by Grothendieck \cite{MR0116023} to encode the rich structure of the ring $K_0(A)$ arising from the linear algebra operations such as exterior power, symmetric powers on vector bundles. This uses the group $\Lambda(A)$ from (\ref{llambda}). 
 
 A pre-$\lambda$ ring is a  pair $(A, \lambda_t)$ of a commutative ring $A$ together with a homomorphism of groups \begin{equation}\label{lamb} \lambda_t: (A, +) \to \Lambda(A) = 1 +tA[[t]] \qquad a\mapsto \lambda_t(a) = 1 + \sum_{r \ge 1} \lambda^r(a)t^r, \qquad \lambda_1(a) =a.\end{equation} The maps $\lambda^r$ behave like "exterior power" operations; concretely, the $\lambda$-operations $\lambda^r:A \to A$ satisfy $$\lambda_t(a+b) =\lambda_t(a).\lambda_t(b), \quad \lambda_t(a) = 1 +at +\cdots,\quad \lambda^r(a+b) = \sum_{i+j =r} \lambda^i(a)\lambda^j(b).$$ Clearly, $\lambda_t(0) =1$ and $\lambda_{t}(-x) = 1/{\lambda_t(x)}$. The opposite pre-$\lambda$-ring is the pair $(A, \sigma_t)$ where 
 \begin{equation}\label{oppo}\sigma_t(a) = 1 + \sum_{r \ge 1} \sigma^r(a)t^r = \frac{1}{\lambda_{-t}(a)}.\end{equation} 
 Given a map $\lambda_t:A \to \Lambda(A)$, the Adams operations $\Psi_n: A \to A$ are defined via \begin{equation}\label{adams}t\frac{d}{dt}{\rm log}~\lambda_t(a) = t \frac{1}{\lambda_t(a)} \frac{d}{dt}\lambda_t(a) = \sum_{n\ge 1} \Psi_n(a)t^{n}. \end{equation} 
 A commutative ring $A$ is a pre-$\lambda$ ring if and only if $$\Psi_n(a) + \Psi_n(b) = \Psi_n(a+b) \quad n \ge 1.$$
 
 The ring $\mathbb Z$ is a pre-$\lambda$ ring with $\lambda_t(n) = (1+t)^n$. Also, $\mathbb R$ is a pre-$\lambda$ ring with $\lambda_t(r)$ for $r \in \mathbb R$ given by either $ (1+t)^r$ or $e^{rt}$. 
 
 A map $(A, \lambda_t) \to (A', \lambda'_t)$ of pre-$\lambda$-rings is a ring homomorphism $f: A \to A'$ such that $\Lambda_f\circ \lambda_t = \lambda'_t \circ f$ as maps from $A$ to $\Lambda(A')$; here $\Lambda_f: \Lambda(A) \to \Lambda(A')$ is the map induced by $f$. 

The group $\Lambda(A)$ becomes a ring \cite[\S2]{MR2377891} with the rule $(1+at).(1+bt) = (1+abt)$ and identity element $(1+t)$; it is a variant of our $W(A)$ - see \cite[p.58]{MR0364425}.
      
For any commutative ring $A$ with identity, there is a canonical functorial pre-$\lambda$-ring structure on $\Lambda(A)$ \cite[p. 18]{MR0364425}. 

A pre-$\lambda$ ring $(A, \lambda_t)$ is said to be a $\lambda$-ring if one of the two equivalent conditions hold
\begin{itemize} 
\item  $\lambda_t:A \to \Lambda(A)$ is a map of pre-$\lambda$ rings. 
\item Adams operations (the ghost components of $\lambda_t$) satisfy $$\Psi_n(ab) = \Psi_n(a).\Psi_n(b), \quad \Psi_n\circ \Psi_m = \Psi_{nm} \quad n,m\ge 1.$$
\end{itemize} 

Pre-$\lambda$ rings (resp. $\lambda$-rings) were previously called $\lambda$-rings (resp. special $\lambda$-rings).

 The ring $\mathbb Z$ is a $\lambda$-ring with $\lambda_t(n) = (1+t)^n$.  It is a theorem of Grothendieck that $W(A)$ is a $\lambda$-ring \cite[p.18]{MR0364425}, \cite[p.13, Proposition 1.18]{larsh}. The ring $W(\mathbb Z)$ is the free $\lambda$-ring on one generator \cite[16.74]{haz}. On $W(A)$, the maps $\lambda^r:W(A) \to W(A)$ are determined by ($[a] =(1-at)^{-1}\in W(A)$) $$\lambda^0 ([a]) = [1], \quad \lambda^1([a]) = [a], \quad \lambda^r([a]) = 1 \in W(A) (r \ge 2),.$$So $\Psi_n([a]) =F_n([a])$ (the first "Adams = Frobenius" theorem in \cite[16.22]{haz}).
 
 The forgetful functor $U$ from the category of pre-$\lambda$ rings to rings, as any forgetful functor, has a left adjoint; surprisingly, $U$ also has a right adjoint (so $U$ is compatible with limits and colimits) \cite[p.~20]{MR0364425}: $$A \mapsto \Lambda(A).$$
 This plays an important role in J.~Borger's theory \cite[p.2]{jimb}. 
    
 The ring $K_0(A)$ above is a $\lambda$-ring; here $\lambda^r(P,f)$ is given by the $r$'th exterior power $(\Lambda^rP, \Lambda^rf)$ of $(P,f)$. The opposite $\lambda$-structure on  $K_0(A)$ is given by the symmetric powers $\sigma^r (P,f) = (\text{Sym}^r P,  \text{Sym}^r f)$.  When $A$ is a field, ${\rm deg}: K_0(A) \simeq \mathbb Z$ is an isomorphism of $\lambda$-rings.  The ring $GK_F$ (see below) is a pre-$\lambda$ ring, but not, in general, a $\lambda$-ring \cite{MR1996804,MR2252764}.   

\subsection*{The Grothendieck ring of varieties}

Fix a field $F$. The Grothendieck ring $GK_F$ (often denoted\footnote{Note that there is no definition known of the higher $K_i({\rm Var}_F)$ for $i >0$.} $K_0({\rm Var}_F)$) of schemes of finite type over $F$ is defined as follows. The generators are given by the isomorphism classes $[X]$ of schemes $X$ (of finite type) over $F$ and relations are $[X-Y] +[Y] = [X]$ for every closed subscheme $Y$ of $X$ and $[X] = [X_{red}]$. The product on $GK_F$ is defined via $[X].[Y] = [X \times Y]$; the class $[{\rm Spec}~F]$ of a point is the identity for multiplication.   As quasi-projective varieties over $F$ additively generate $GK_F$, the case of quasi-projective varieties  usually suffice to prove statements about $GK_F$.  Any map of fields $F \to F'$ induces a ring homomorphism (base change) $b: GK_F \to GK_{F'}$. 

For any scheme $X$ of finite type over Spec~$\mathbb C$, we write $\chi(X)$ for the Euler characteristic for the cohomology with compact support of the topological space $X(\mathbb C)$. The map $[X] \mapsto \chi(X)$ defines a ring homomorphism $\chi: GK_{\mathbb C} \to \mathbb Z$; see below for details. Thus, for any scheme $Y$ over Spec~$F$, the element $[Y] \in GK_F$ can be viewed as \emph{the universal Euler characteristic with compact support} of $Y$.

The Kapranov zeta function (\ref{kzeta}) gives a pre-$\lambda$ ring structure on $GK_F$ via $\lambda^r([X]) = [X^{(r)}]$; here $X$ is a quasi-projective scheme and $X^{(r)}$ is the $r$'th symmetric product of $X$; in fact, there are at least four natural pre-$\lambda$ structures on $GK_F$ \cite[p. 526]{MR3065021}. 

Later, for Theorem \ref{macdonald}, we shall need a variant $GK'_F$ of $GK_F$. The ring $GK'_F$ (denoted ($\bar{K}_0(space), \cup)$ in \cite[p.299]{MR2891866}) has the same generators as $GK_F$ subject to  relations $[X] = [X_{red}]$ and  (disjoint unions): $[X\amalg Y] = [X] + [Y]$.   There is a natural quotient map $GK'_F \to GK_F$.  The group $GK'_F$ is the Grothendieck group associated with the abelian monoid of isomorphism classes of (reduced) schemes with disjoint union. The Cartesian product makes $GK'_F$ into a commutative ring. In many applications, one replaces $GK_F$ by various localizations and completions.

The genesis of $GK_F$ dates back to 1964 (it was considered by Grothendieck \cite[p.174]{grot} in his letter (dated August 16, 1964) to J.-P.~Serre; it is the first written mention of the word "motives").  The ring $GK_F$ is a shadow (decategorification) of the category of motives; some aspects of the yoga of motives are not seen at the level of $GK_F$.
We refer to  \cite[Chapter 7]{mustata} for a careful and detailed exposition of $GK_F$.

\subsection*{Schemes over finite fields and their zeta functions}  

\cite{deligne}

Let $X$ be a scheme of finite type over Spec~$\mathbb Z$, $|X|$ the set of closed points of $X$ and, for $x \in |X|$, let $N(x)$ be the cardinality of the residue field $k(x)$ of $X$ at $x$. The Hasse-Weil zeta function of $X$ is $$\zeta_X(s) = \prod_{x \in |X|} \frac{1}{(1- N(x)^{-s})} $$ which converges when the real part of $s$ is sufficiently large. Note that $\zeta_{ {\rm Spec}~Z}$ is Riemann's zeta function.

Now fix a a finite field $k = \mathbb F_q$ (here $q=p^f$) and let $X$ be a scheme of finite type over ${\rm Spec}~\mathbb F_q$. For each closed point $x$, the residue field $k(x)$ is a finite extension of $k$ (whose degree we denote by deg~$(x)$) of cardinality $q^{{\rm deg}(x)}$. The power series $$Z(X, t) = \prod_{x \in |X|} (1-t^{{\rm deg}(x)})^{-1}$$ converges for $t$ sufficiently small and one has $$Z(X, q^{-s}) = \zeta_X(s).$$ It is a theorem of B.~Dwork that $Z(X,t)$ is a rational function of $t$.  Other useful forms of $Z(X,t)$ include 
\begin{align}\label{zero} Z(X, t) & = {\rm exp} \bigg(\sum_{ r \ge 1} \#X(\mathbb F_{q^r}) \frac{t^r}{r} \bigg)\nonumber\\
& =\prod_{x \in |X|} (1-t^{{\rm deg}(x)})^{-1} \nonumber\\ & = \prod_{x \in |X|} ( 1 + t^{{\rm deg}(x)} + \cdots) \nonumber\\ & = \sum_{Y} t^{{\rm deg}(Y)}.\end{align}
Here $Y$ runs over all effective zero cycles of $X$. Recall that a zero cycle $Y =\sum_i n_i x_i$ (a finite sum) on $X$ is an element of the free abelian group generated by the closed points $x_i$ of $X$ and that $Y$ is effective if the $n_i$ are all non-negative; also, ${\rm deg}(Y) = \sum_i n_i~{\rm deg}(x_i)$. The identity (\ref{zero}) exhibits $Z(X,t)$ as a generating function of effective zero-cycles.  Thus the zeta function of $X$ depends only on the zero-cycles of $X$; in Serre's \cite{serre} terminology, $\zeta_X(s)$ depends only on the atomization of $X$. 

\subsection*{Euler characteristics} 

For any scheme $X$ over ${\rm Spec}~k$ as above, one can view $Z(X,t) \in 1 +t\mathbb Z[[t]] $ as an element of $W(\mathbb Z)$. Here are a few properties of $Z(X,t)$. 
\begin{enumerate}
\item   If $Y$ is a closed subscheme of $X$, then $Z(X,t) = Z(X-Y,t).Z(Y,t)$.  
\item $Z(X,t) = Z(X_{red}, t)$.
\item \emph{Inclusion-Exclusion Principle}: for any covering $X= Y_1 \cup \cdots \cup Y_n$ of $X$ by locally closed subschemes $Y_1, \cdots, Y_n$, one has  $$Z(X,t) = \prod_{j=1}^n (\prod_{1\le i_1 < \cdots i_j \le n} Z( Y_{i_1} \cap \cdots \cap Y_{i_j}, t)^{(-1)^{j +1}}).$$
By (2), the zeta function is insensitive to the scheme structure on the intersections. 
\end{enumerate} 

Why are the special values of $Z(X,t)$ given by Euler-characteristic formulas \cite{lichtenbaum}  (as $\ell$-adic Euler characteristics or as Weil-\'etale cohomology Euler characteristics)? Because $Z(X,t)$ itself is an Euler characteristic! 
To see this, compare the properties above of $Z(X,t)$ with the properties of the usual Euler characteristic $\chi$, say, for complex algebraic varieties (see also (\ref{zeuler})):  

\begin{itemize} 
\item If $Y$ is an closed subscheme of $X$, then $\chi(X) = \chi(X-Y) + \chi(Y)$. 
\item More generally, if $X$ is the disjoint union of $X_1$ and $X_2$, then $\chi(X) = \chi(X_1) + \chi(X_2)$. 
\item $\chi(X \times Y) = \chi(X).\chi(Y)$. 
\item For any locally trivial fiber bundle $X \to B$ with fibre $F$, one has $\chi(X) = \chi(B) \chi(F)$. 
\item $\chi(\mathbb A^n) =1$. 
\item (homotopy invariance) $\chi(X \times \mathbb A^n) = \chi(X)$ 
\end{itemize}
The zeta function satisfies analogous properties, except for homotopy invariance.  As $Z({\rm Spec}~\mathbb F_q, t) = (1-t)^{-1} = [1]$ and $Z(\mathbb A^n, t) = \frac{1}{1-q^nt} = [q^n]$,  the zeta function is clearly not homotopy invariant.  Note the identity
\begin{equation}\label{affine}Z(X \times \mathbb A^n, t) = Z(X, q^nt).\end{equation} 

The cohomological description (\ref{coh-d}) of $Z(X,t)$  is in terms of cohomology with compact support. But cohomology with compact support is  not homotopy invariant,  So we can expect the zeta function {\emph{not}} to be homotopy invariant. 

 Given a scheme $X$ over $\mathbb F_q$, we can consider its base change $X_m$ to $\mathbb F_{q^m}$ for any $m \ge 1$. The zeta function is not preserved under base change; namely, $Z(X,t)$ and $Z(X_m, t)$ are usually different. What is the relation between these functions?  What is the relation between the zeta function of a scheme $Y'$ over $k'= \mathbb F_{q^m}$ and that of its Weil restriction of scalars $Y={\rm Res}_{k'/k}Y'$, a scheme over $k=\mathbb F_q$? The properties listed above indicate that the map $ X \mapsto Z( X,t)$ is a homomorphism from $GK_{\mathbb F_q} \to W(\mathbb Z)$ of groups. Is it a ring homomorphism? 
 
 We shall see the answers in the next section.

\section{Main results}
\begin{theorem} \label{main}  

Let $X$ and $Y$ be schemes of finite type over ${\rm Spec}~k = \mathbb F_q$. 

(i) {\rm \cite[p.53]{MR0364425}, \cite[Theorem 3]{MR1021547}, \cite[p.2]{lenstra}} The zeta function of the product $X \times Y$ is the Witt product of the zeta functions of $X$ and $Y$: 
$$Z(X \times Y, t) = Z(X, t)* Z(Y, t) \in W(\mathbb Z).$$ In particular, $$Z(X^n, t) = \underbrace{Z(X,t) *\cdots * Z(X,t)}_{\text{n factors}}.$$ 

(ii) The map $$\kappa: GK_{\mathbb F_q} \to W(\mathbb Z) \qquad X \mapsto Z(X,t)$$ is a ring homomorphism. Hence $X \mapsto Z(X,t)$ is a motivic measure (see \S\ref{mm}).

(iii)  If $X \to B$ is a (Zariski locally trivial) fiber bundle with fibre $F$, namely, there is a covering of $B$ by Zariski opens $U$ with $X\times_B U$ isomorphic to $U \times_{{\rm Spec}~k} F$, then $$Z(X,t) = Z(B,t) *Z(F,t).$$ 

(iv) For any $m \in \mathbb N$, let $X_m$ be the variety over $\mathbb F_{q^m}$ obtained by base change along  $b:\mathbb F_q \to \mathbb F_{q^m}$.  One has 
$$Z(X_m/{\mathbb F_{q^m}} ,t) = F_m (Z(X/{\mathbb F_q},t)).$$

(v) One has a commutative diagram of ring homomorphisms
\[ 
\begin{CD} 
GK_{\mathbb F_q} @>{b}>> GK_{\mathbb F_{q^m}}\\
@V{\kappa}VV @VV{\kappa}V\\
W(\mathbb Z) @>{F_m}>> W(\mathbb Z).\\
\end{CD}
\]
\end{theorem}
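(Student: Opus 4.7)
The unifying tool for all five parts is the ghost map $gh:W(\mathbb Z)\hookrightarrow \mathbb Z^{\mathbb N}$, which is injective. The plan is to compute the ghost coordinates of $Z(X,t)$ and then compare both sides of each identity componentwise.

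The first step is to observe that the ghost coordinates of $Z(X,t)$ are simply the point counts. From the identity $Z(X,t)=\exp\bigl(\sum_{r\ge 1}\#X(\mathbb F_{q^r})\,t^r/r\bigr)$ of (\ref{zero}), differentiation yields $t\frac{d}{dt}\log Z(X,t)=\sum_{r\ge 1}\#X(\mathbb F_{q^r})\,t^r$, so $gh_n(Z(X,t))=\#X(\mathbb F_{q^n})$. With this in hand, (i) is immediate: since $gh$ is a ring homomorphism,
\[
gh_n\bigl(Z(X,t)*Z(Y,t)\bigr)=\#X(\mathbb F_{q^n})\cdot\#Y(\mathbb F_{q^n})=\#(X\times Y)(\mathbb F_{q^n})=gh_n\bigl(Z(X\times Y,t)\bigr),
\]
and injectivity of $gh$ closes the argument.

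For (ii), the scissor relation $Z(X,t)=Z(X-Y,t)\cdot Z(Y,t)$ from property (1) of Section 1 (together with $Z(X,t)=Z(X_{red},t)$) says exactly that $\kappa$ respects the defining relations of $GK_{\mathbb F_q}$ as a map of abelian groups into $(\Lambda(\mathbb Z),\times)=(W(\mathbb Z),+_W)$. Multiplicativity is (i), and $\kappa([\mathrm{Spec}\,k])=(1-t)^{-1}=[1]$ is the multiplicative identity, so $\kappa$ is a ring map. Part (iii) is the same ghost calculation: for a Zariski locally trivial fibration $X\to B$ with fibre $F$, every $\mathbb F_{q^n}$-point of $B$ has fibre isomorphic to $F_{\mathbb F_{q^n}}$, so $\#X(\mathbb F_{q^n})=\#B(\mathbb F_{q^n})\cdot\#F(\mathbb F_{q^n})$, and one concludes by injectivity of $gh$.

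For (iv), combine the formula (\ref{frob}) for the Frobenius on ghost coordinates with the elementary identity $\#X_m(\mathbb F_{q^{mn}})=\#X(\mathbb F_{q^{mn}})$:
\[
gh_n\bigl(F_m Z(X,t)\bigr)=gh_{mn}\bigl(Z(X,t)\bigr)=\#X(\mathbb F_{q^{mn}})=\#X_m(\mathbb F_{q^{mn}})=gh_n\bigl(Z(X_m,t)\bigr),
\]
so injectivity of $gh$ yields the claim. Finally, (v) is obtained by assembling pieces: the base change map $b$ is a ring homomorphism because $(X\times_k Y)_m\simeq X_m\times_{k'}Y_m$ and $[X-Y]_m=[X_m-Y_m]$; $\kappa$ is a ring homomorphism by (ii); and the square commutes on generators $[X]$ precisely by (iv).

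No step presents a real obstacle: once the identification $gh_n(Z(X,t))=\#X(\mathbb F_{q^n})$ is made, everything reduces to counting $\mathbb F_{q^n}$-points and invoking injectivity of the ghost map. The mildest subtlety is in (iii), where one must note that Zariski-local triviality is what guarantees the pointwise counting identity over every extension $\mathbb F_{q^n}$.
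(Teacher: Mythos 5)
Your proof is correct and, for parts (i), (ii), (iv), and (v), follows the same plan as the paper's primary argument: identify $gh_n(Z(X,t)) = \#X(\mathbb F_{q^n})$, then compare ghost coordinates and invoke injectivity of $gh$. The one place where you take a slightly different route is (iii). The paper proves (iii) by first applying (ii) locally on a trivializing open $V\subset B$ to get $Z(X\times_B V, t) = Z(V,t)*Z(F,t)$, then patching with the inclusion-exclusion principle for zeta functions. You instead argue directly at the ghost level: Zariski-local triviality forces the fiber over every $\mathbb F_{q^n}$-point of $B$ to be isomorphic to $F_{\mathbb F_{q^n}}$, so $\#X(\mathbb F_{q^n}) = \#B(\mathbb F_{q^n})\cdot\#F(\mathbb F_{q^n})$, and injectivity of $gh$ finishes. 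Your route avoids the inclusion-exclusion bookkeeping and is arguably more transparent, though the two arguments are of comparable depth. (The paper also records a second, cohomological proof of (i) via K\"unneth in $\ell$-adic cohomology; you only use the point-counting proof, which is the one the paper cites to the literature and is entirely sufficient here.)
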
   

\begin{remark}  N.~Naumann \cite{niko2} also has proved Theorem \ref{main}; see footnote above.
 
(i)  Since $X\times_{{\rm Spec}~\mathbb F_q} {\rm Spec}~\mathbb F_q =X$, the "product" of  $Z(X,t)$ and $Z({\rm Spec}~\mathbb F_q, t)$ should be $Z(X,t)$. So $Z({\rm Spec}~\mathbb F_q, t)$ should be the identity for this "product".  As $$Z({\rm Spec}~\mathbb F_q, t) = (1-t)^{-1} = [1] \in W(\mathbb Z),$$this is highly suggestive of the Witt ring.  The identity (\ref{affine}) provides another clue:\begin{align*}
Z(X \times \mathbb A^n, t) = Z(X, q^nt) = Z(X,t)* [q^n] = Z(X,t)*Z(\mathbb A^n,t).\end{align*}

(ii) The multiplicative group $\mathbb G_m$ is the complement of a point in $\mathbb A^1$. So 
\begin{align*} Z(\mathbb G_m, t) & = Z(\mathbb A^1, t) -_W Z({\rm Spec}~\mathbb F_q, t)\\
& =  \frac{(1-t)}{(1-qt)}\\ & = [q] -_W[1] \in W(\mathbb Z).\end{align*} 
So we get $$Z(\mathbb G_m^r, t) = \underbrace{([q] -_W[1]) * \cdots *([q]-_W[1])}_{\text{r factors}}$$is the $r$'th power of $Z(\mathbb G_m,t)$ in $W(\mathbb Z)$.

(iii) (J. Parson) Consider the fibration $\mathbb A^{n+1} - {0} \to \mathbb P^n$ with fibers $\mathbb G_m$. Using Theorem \ref{main} and the Inclusion-Exclusion principle, one has \begin{align*} Z(\mathbb P^n, t) & = \frac{Z(\mathbb A^{n+1} -{0}, t)}{Z(\mathbb G_m,t)} = \frac{[q^{n+1}] -_W [1]}{[q] -_W[1]}\\ &=  [q^n] +_W \cdots +_W [1]\\& = \frac{1}{(1-q^nt) \cdots (1-t)}.\end{align*}

(iv) For certain objects $M$ in a $K$-linear rigid category $A$,  B.~Kahn \cite{kahn} has defined a motivic zeta $Z(M,t) \in 1+ tK[[t]]$; in view of  \cite[Lemma 16.2]{kahn} and our theorem, his $Z(M,t)$ is naturally an element of the Witt ring $W(K)$. 

(v) The reader will find Witt ring overtones in \cite[1.5]{deligne}, F.~Heinloth \cite[p.~1942]{MR2377891}, and in the proof of the Hasse-Davenport relations \cite[Chapter 11, \S4, p.165]{irerosen} in view of (\ref{ghost}). \qed
\end{remark} 
 
 \begin{lemma}\label{ghosty} The ghost components of
 $$P(t) = {\rm exp} \bigg(\sum_{r\ge 1} b_r  \frac{t^r}{r}\bigg) \qquad \in W(\mathbb Z)$$ are given by $$gh(P) = (b_1, b_2,b_3, \cdots). $$ \end{lemma}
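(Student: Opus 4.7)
The plan is a direct computation from the definition of the ghost map. By construction, the ghost components of $P \in W(A)$ satisfy
$$t\,\frac{P'(t)}{P(t)} \;=\; \sum_{r \ge 1} gh_r(P)\, t^r,$$
so they are read off from the logarithmic derivative of $P$.

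I would carry out three routine steps. First, take the formal logarithm: since $P(t) = \exp\bigl(\sum_{r \ge 1} b_r t^r/r\bigr)$, one has $\log P(t) = \sum_{r \ge 1} b_r t^r/r$ in $t\,\mathbb{Q}[[t]]$. Second, differentiate term-by-term and multiply by $t$:
$$t\,\frac{P'(t)}{P(t)} \;=\; t\,\frac{d}{dt}\log P(t) \;=\; \sum_{r \ge 1} b_r\, t^r.$$
Third, compare coefficients of $t^r$ with the ghost formula to conclude $gh_r(P) = b_r$ for every $r \ge 1$. The intermediate manipulations $(\log, \exp)$ take place over $\mathbb{Q}$, but the hypothesis $P(t) \in W(\mathbb{Z})$ together with the injectivity of $W(\mathbb{Z}) \hookrightarrow W(\mathbb{Q})$ (noted earlier for any injective ring map) ensures that the identity descends to $W(\mathbb{Z})$ and that each $b_r$ is automatically an integer.

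There is no real obstacle here; the lemma is essentially the observation that the recipe $\exp\bigl(\sum b_r t^r/r\bigr)$ inverts the logarithmic-derivative operation that defines $gh$. As a sanity check, specializing $b_r = b^r$ for a fixed $b \in \mathbb{Z}$ gives $P(t) = \exp\bigl(-\log(1-bt)\bigr) = (1-bt)^{-1} = [b]$ via the identity recalled just before the ghost map was introduced, so the lemma reproduces $gh_r([b]) = b^r$, matching the Teichm\"uller computation already on record. Equally, taking $b_r = \mathrm{Trace}(\Psi^r \mid U)$ and comparing with (\ref{weil153}) recovers $P(t) = \det(1-t\Psi \mid U)^{-1}$, as expected.
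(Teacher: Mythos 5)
Your proof is correct and takes the same route as the paper: the paper's argument is exactly the one-line computation $t\,\frac{1}{P}\frac{dP}{dt} = t\,\frac{d}{dt}\log P = \sum_{r\ge 1} b_r t^r$, which you reproduce. Your added remarks about passing through $\mathbb{Q}$ and the sanity checks with $[b]$ and with $\det(1-t\Psi)^{-1}$ are harmless elaborations on the same computation.
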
 
 
 \begin{proof} Direct computation:  
$$ t \frac{1}{P} \frac{dP}{ dt}  = t \frac{d{\rm log}~P}{dt} = \sum_{ r \ge 1} b_r t^r. \qquad $$\end{proof}
 
\begin{proof}  (of Theorem \ref{main})

(i) There are two ways to prove this.

 The first proof is based on the fact that the ghost map $$gh: W(\mathbb Z)  \to {\mathbb Z}^{\mathbb N}$$ is an injective ring homomorphism. Applying Lemma \ref{ghosty} to $$Z(X,t) = {\rm exp}(\sum_{ r \ge 1} \#X(\mathbb F_{q^r}) \frac{t^r}{r}),$$ we find that $ \#X(\mathbb F_{q^n})$ is the $n$'th ghost component of $Z(X,t)$.  
Now (i) follows from the identity $$\#(X \times Y) (\mathbb F_{q^n}) = \#X(\mathbb F_{q^n}). \# Y(\mathbb F_{q^n}).$$ 
  
The second proof is based on K\"{u}nneth theorem and the cohomological interpretation of the zeta function; recall \cite[1.5.4]{deligne} \begin{equation}\label{coh-d}Z(X,t) = \prod_i~{\rm det}(1-F^*t,  H^i_c(\bar{X}, \mathbb Q_{\ell}))^{(-1)^{i+1}}\end{equation} where the prime $\ell \neq$~ char~$k$ and $F^*$ is the Frobenius. Write 
\begin{equation}\label{p-eye} P_i(X,t) = {\rm det}(1-F^*t,  H^i_c(\bar{X}, \mathbb Q_{\ell})).\end{equation}  We can write $Z(X,t)$ in $W(\bar{\mathbb Q_{\ell}} )$ as a sum $\sum \pm [\alpha_X]$ over  the (inverse) eigenvalues $\alpha_X$ of Frobenius of $X$. By the K\"{u}nneth theorem, 
any $\alpha_{X \times Y}$ is a product of a $\alpha_X$ and a $\alpha_Y$.  Now use (\ref{prod}) and the fact that 
$W(\mathbb Z)$ is a subring of $W(\bar{\mathbb Q_{\ell}})$ (if the map $A \to B$ is injective, the induced map $W(A) \to W(B)$ is injective).

(ii) follows from (i). 

(iii) For an open $V \subset B$ such that the fibre bundle is trivial: $X \times_B V$ is isomorphic to $ V \times F$, one has, by (ii), $Z(V,t) = Z(B,t)*Z(F,t)$. Applying this to the open covering $U$ on which $F$ is trivial and using the inclusion-exclusion principle for the zeta function, one gets (iii). 

(iv)  Write $ g_n = \# X(\mathbb F_{q^n})$ and $h_n = \#X_m(\mathbb F_{q^{nm}})$. These are the ghost components of $Z(X,t)$ and $Z(X_m/{\mathbb F_{q^m}}, t)$ respectively. As  $$ h_n = \#X_m (\mathbb F_{q^{mn}})  = \#X(\mathbb F_{q^{mn}}) = g_{nm},$$ the definition of $F_m$ in (\ref{frob}) gives (iv).  

(v) follows from (ii) and (iv)  \end{proof}

\begin{remark} \footnote{Almost everyone I discussed this with arrived, like me, at the statement of Theorem \ref{main} via K\"unneth, but it is the first proof that is in \cite{MR0364425, MR1021547, lenstra}.}The first proof of (i) is easier and simpler than the second proof which uses standard but deep results about \'etale cohomology. There is a reason for including two proofs.  Namely, the first proof does not generalize to the noncommutative situation \cite{MR3108695} (of smooth proper DG categories over $\mathbb F_q$) where a result analogous to Theorem \ref{main} is expected to hold. The second proof may also be relevant in the context of $\Gamma$-factors; see the last section of the paper. 

Suppose $X$ is a smooth proper variety. Using (\ref{p-eye}), we can write \begin{align}\label{zeuler} Z(X,t) &= ^W\sum_i (-1)^{i} P_i(X, t)\nonumber\\ & = P_0(X,t) -_W P_1(X,t) +_WP_2(X,t)  -_W \cdots +_WP_{2{\rm dim}~X}(X,t) \end{align} as the alternating sum  in the Witt ring $W(\mathbb Z)$ of $P_i(X,t)$.  This exhibits $Z(X,t)$ as an "Euler characteristic" of $X$. This result holds for any scheme $X$ of finite type over ${\rm Spec}~\mathbb F_q$ in the larger ring $W(\mathbb Z_{\ell})$ and is expected to hold even in $W(\mathbb Z)$; it is expected but not known that $P_i(X,t) \in \mathbb Z[t]$ in general.\qed
\end{remark} 

\subsection*{Weil restriction of scalars} 

We now study the effect of Weil restriction of scalars on the zeta function.  Let $k =\mathbb F_q$ and  $G = {\rm Gal}~(\bar{k}/k)$. Write $\gamma$ for the canonical (topological) generator $x \mapsto x^q$. Fix an extension $k' =\mathbb F_{q^m} \subset \bar{k}$ and put $H = ~{\rm Gal}~(\bar{k}/{k'}) = < \gamma^m>$, a subgroup of $G$. Let $\Gamma = G/H = {\rm Gal}~(k'/k)$; the  image of $\gamma$ in $\Gamma$ is a generator (also denoted $\gamma$) of $\Gamma$.

For any scheme $X'$ of finite type over Spec~$k'$, one has a scheme $X = {\rm Res}_{k'/k}X'$ obtained by Weil restriction of scalars from $k'$ to $k$ uniquely characterized by \begin{align}\label{adj} \text{Mor}_{Sch_k}(Y, X) = \text{Mor}_{Sch_{k'}}(Y\times_{k} k', X').\end{align}  
This gives a Weil restriction functor $R_m : Sch_{k'} \to Sch_k$. If the dimension of $X'$ is $n$, then the dimension of $X = R_mX'$ is $m.n$. 

The standard description \cite{MR918564, MR1995864} of $X = R_m X'$ proceeds by showing that the product $$Y = \prod_{\sigma \in \Gamma} \sigma X'$$ of the conjugates of $X'$ can be endowed with effective descent data, i.e., the variety $Y$ over $k'$ comes from a variety $X$ over $k$. Any variety $T$ over $k$ is uniquely determined (up to isomorphism) by the pair $$(\bar{T}/\bar{k}, \pi_T)$$ of the variety $\bar{T}$ over $\bar{k}$ and the relative $q$-Frobenius $\pi_T:T \to \gamma T$ (relative to $k$).  Here the defining equations of $\gamma T$ are obtained by applying $\gamma$ to the (coefficients of the) defining equations of $T$; see \cite{MR1995864} for more details. So $X$ is pinned down by $\pi_X: X \to \gamma X$. One takes $\pi_X$ to be the map such that, on each factor $\sigma X'$,  
$$\pi_X: \sigma X' \to \gamma \sigma X'.$$ Via $X \times_{\text{Spec}~k} \text{Spec}~k' = (X')^m$, one checks that $\pi_X^m =\pi_{(X')^m}$. 

Over $\bar{k}$, the variety $\bar{X}$ is isomorphic to $(\bar{X'} )^m$. Therefore, $H^*_c(\bar{X}, \mathbb Q_{\ell})$ (as a $\mathbb Q_{\ell}$-vector space) is given by the K\"unneth theorem applied to the product variety $(\bar{X'})^m$. The Galois action on the cohomology of $\bar{X}$ is determined by the (relative) $q$-Frobenius $\pi_X$ over $k$.

\subsection*{Weil restriction and Verschiebung} 

Let us begin with  two basic examples (due to Parson) 
\begin{enumerate} 
\item if $X' = {\rm Spec}~k' = {\rm Spec}~\mathbb F_{q^m}$, then $X=R_mX'$ is $X'$ considered as a ${\rm Spec}~\mathbb F_q$-scheme. Since $Z(X,t) = (1-t^m)^{-1}$ and $Z(X',t) = (1-t)^{-1}$, we find $Z(X,t) = V_m Z(X',t)$. 

\item if $X' = \mathbb A^1$ is the affine line over ${\rm Spec}~k'$, then $X =  R_mX' \simeq \mathbb A^m$ is $m$-dimensional affine space over ${\rm Spec}~k$. So $Z(X,t) = [q^m] = (1-q^mt)^{-1}$ and $Z(X',t) = (1-q^mt)^{-1}$ are equal,  but $Z(X,t) \neq V_mZ(X', t)$. 
\end{enumerate} 

Now the Weil restriction is analogous to Verschiebung: for instance, as $X \times_k k' = (X')^m$, the composition of Weil restriction and base change transforms $X'$ to its $m$'th power is analogous to $F_m \circ V_m$ is multiplication by $m$. 
 Theorem \ref{main} (iii) relating Frobenius and base change (and atomizing) may lead one to suspect the relation $$Z(X,t) = Z(R_mX', t) = V_m Z(X',t).$$ by (see, in this regard, the discussion of $V_m$ in \cite[p.252]{grayson}) 
 \begin{align}\label{vb} 
 Z(R_m X',t) = ^W\sum_{x' \in |X'|} Z(R_m x', t) &=
  ^W\sum_{x' \in |X'|} V_m Z(x',t) = V_m~^W{}\sum_{x' \in |X'|}Z(x',t)\nonumber\\&= V_m Z(X',t).\end{align} 
Only the last equality of (\ref{vb}) is correct  as explained by the following remarks.
\begin{remark}\label{niko} (i) The Weil restriction functor $R_m$ does not give rise to a ring homomorphism $GK_{k'} \to GK_k$. Even though $R_m$ is compatible with products: $R_m(X' \times_{k'}Y') = R_mX' \times_k R_m Y'$, it is not compatible with disjoint unions. 

(ii) (Parson) The base change functor $$b: Sch_k \to Sch_{k'} \qquad X \mapsto X\times_k k'$$ has both a right and a left adjoint.  The right adjoint $R_m$ - see (\ref{adj})-  is compatible with products rather than sums (which is why the atomization argument of (\ref{vb}) is incorrect).  The left adjoint $r_m: Sch_{k'} \to Sch_k$ sends a scheme $X'$ over ${\rm Spec}~ k'$ to the scheme $X' \to {\rm Spec}~k' \to {\rm Spec}~ k$. There is a natural map from $r_m X' \to R_m X'$ which is not an isomorphism in general (check dimensions). Since $b$ has both adjoints, it is compatible with limits and colimits. 

(iii) As Verschiebung is additive, it is analogous to $r_m$;  Naumann \cite{niko2} has proved the relation $Z(r_mX, t) =  V_m Z(X,t)$. \qed \end{remark}
Although $Z(X,t)$ and $V_m Z(X't)$ are not equal in general, one has: \emph{for every integer $i$ with $0 < i \le 2\text{dim}~X'$, the polynomial $P_i(X,t)$ is divisible by $V_mP_i(X',t)$. }

\subsection*{Zeta functions and Weil restriction} 

\begin{theorem}\label{weil}  Let notations be as above. 

(a)  Let $A'$ be  an abelian variety  over $k'$. 
Let $P_1 (A',t) =  \prod_j (1-\alpha_jt)$ and $P_1(A, t)= \prod_r (1-\beta_rt)$. 
 One has $$P_1(A,t) = V_m P_1(A',t) = P_1(A', t^m) = \prod_j(1-\alpha_jt^m).$$
The set $\{\beta_1^m, \cdots\}$ coincides with the set $\{\alpha_1, \cdots\}$.
 
(b) For any smooth projective variety $X'$, one has $$P_1(X,t)  = V_m P_1(X',t).$$  
 
(c) Let $X'$ be a smooth proper geometrically connected variety over $k' = \mathbb F_{q^m}$.  For each integer $0 < i \le 2{\rm dim}~X'$, the polynomial $P_i(X,t)$ is divisible by $V_m P_i(X',t)$.  In general, $$Z(X,t) \neq V_m Z(X',t),$$ 
\begin{equation}\label{weil2} Z(X \times_k k',t) =  F_m Z(R_mX',t) = Z((X')^m, t) = \underbrace{Z(X,t) * \cdots *Z(X,t)}_{\text{m factors}}.\end{equation} 
The relation between  $$a_r =\# X'(\mathbb F_{q^r}) \quad \text{and} \quad b_r = \#X(\mathbb F_{q^{mr}})$$can be described explicitly (using $d = \text{gcd}~(m,r)$ and  $r = sd$): 
\begin{equation}\label{reln} b_r = a_{s}^d.\end{equation}
\end{theorem}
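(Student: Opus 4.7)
The unifying tool is the K\"unneth decomposition of $H^*_c(\bar X, \mathbb Q_\ell)$ coming from $\bar X \cong (\bar{X'})^m$, combined with the explicit description of $\pi_X$ recalled just above the theorem: it cyclically permutes the $m$ factors with a twist by $\pi_{X'}$ on one factor, and $\pi_X^m = \pi_{(X')^m}$ acts componentwise as $\pi_{X'}$. The plan is to identify each $\pi_X^*$-stable K\"unneth summand and compute the associated characteristic polynomial of $\pi_X^*$.

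For (a), and by the same argument (b), I would first observe that $H^1(\bar X) \cong \bigoplus_{j=1}^m H^1(\bar{X'}_j)$: since $H^0(\bar{X'}) = \mathbb Q_\ell$, the only degree-one K\"unneth summands are those with a single $H^1$ slot and $H^0$'s elsewhere. On this direct sum $\pi_X^*$ is a block companion operator $M$ cyclically permuting the $m$ summands with a single twist by $\pi_{X'}^*$; a direct computation shows $M^m = \pi_{X'}^* \oplus \cdots \oplus \pi_{X'}^*$, so the eigenvalues of $M$ are precisely all $m$-th roots of the eigenvalues of $\pi_{X'}^*$. Equivalently $\det(I - t M) = \det(I - t^m \pi_{X'}^*)$, giving $P_1(A, t) = V_m P_1(A', t) = P_1(A', t^m)$ and the multiset equality $\{\beta_i^m\} = \{\alpha_j\}$.

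For (c) with $0 < i \le 2\dim X'$, I would expand $H^i_c(\bar X) = \bigoplus H^{i_1}_c(\bar{X'}) \otimes \cdots \otimes H^{i_m}_c(\bar{X'})$ over all multi-indices $(i_1,\ldots,i_m)$ with $\sum_j i_j = i$, and note that $\pi_X^*$ permutes the summands by cyclic shift of the multi-indices and hence preserves each $\mathbb Z/m$-orbit. The orbit of $(i, 0, \ldots, 0)$ has length $m$ and the corresponding subspace $\bigoplus_{j=1}^m H^i_c(\bar{X'}_j)$ carries precisely the block companion operator analyzed in (a), contributing the factor $V_m P_i(X', t)$. Other orbits contribute further polynomial factors, so $V_m P_i(X', t) \mid P_i(X, t)$. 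The example $X' = \mathbb A^1_{k'}$, $X = \mathbb A^m_k$ with $Z(X,t) = (1 - q^m t)^{-1}$ versus $V_m Z(X', t) = (1 - q^m t^m)^{-1}$, shows $Z(X, t) \ne V_m Z(X', t)$ in general.

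The chain (\ref{weil2}) is then immediate from Theorem~\ref{main}: part (iv) gives $Z(X \times_k k', t) = F_m Z(X, t) = F_m Z(R_m X', t)$; the $k'$-isomorphism $X \times_k k' \cong (X')^m$ gives the middle equality; and part (i) gives the last as an $m$-fold Witt product. For (\ref{reln}), the Weil-restriction adjunction (\ref{adj}) yields $\# X(\mathbb F_{q^r}) = \# X'(\mathbb F_{q^r} \otimes_{\mathbb F_q} \mathbb F_{q^m})$; with $d = \gcd(r, m)$ the algebra $\mathbb F_{q^r} \otimes_{\mathbb F_q} \mathbb F_{q^m}$ splits as the product of $d$ copies of $\mathbb F_{q^{rm/d}}$, and writing $r = sd$ gives the stated formula. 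The main obstacle is the clean identification of $\pi_X^*$ as a companion block on the relevant K\"unneth summand (part (a)); once that is in place, everything else reduces to bookkeeping about K\"unneth decompositions and cyclic orbits together with Theorem~\ref{main}.
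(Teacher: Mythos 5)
Your argument is correct and rests on the same core computation as the paper: the cyclic companion-block structure of $\pi_X^*$ on the K\"unneth summands of $H^*_c(\bar X) = H^*_c((\bar{X'})^m)$, whose characteristic polynomial is the Verschiebung $V_m$ of that of $\pi_{X'}^*$. The one place you take a shorter path is (a) and (b). The paper proves (a) via the $G$-equivariant isomorphism $T_\ell A \cong \text{Ind}_H^G\, T_\ell A'$ of Tate modules (with the remark that the characteristic polynomial of a generator on an induced representation of a cyclic group is precisely the $V_m$ of the original one, e.g.\ $V_m[1]$ for the regular representation), and then deduces (b) from (a) through the Albanese and Picard varieties together with functoriality of Weil restriction. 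You instead prove (b) directly --- and (a) as the special case --- by running the $H^1$-K\"unneth/companion-block computation, which is exactly the argument the paper reserves for the orbit of $(i,0,\dots,0)$ in part (c). Your route is more uniform and avoids the Albanese/Picard detour, at the mild cost of not surfacing the induced-representation viewpoint ($\text{Ind}_H^G$, Frobenius reciprocity) that the paper uses to explain conceptually why Verschiebung must appear. For (c), the chain \eqref{weil2}, and the point count \eqref{reln}, your derivation coincides with the paper's in substance: the same $U_i$-subspace, the same reduction of \eqref{weil2} to $X\times_k k' = (X')^m$ and Theorem~\ref{main}, and the same Galois-theoretic splitting of $\mathbb F_{q^m}\otimes_{\mathbb F_q}\mathbb F_{q^r}$ into $d=\gcd(m,r)$ copies of $\mathbb F_{q^{ms}}$.
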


\begin{remark}  (i) The cohomology of any abelian variety is an exterior algebra on its first cohomology. So the zeta function of $A$ is determined by $P_1(A,t)$. 

(ii) Note that (b) is not true for $i=0$. If $X'$ is geometrically connected, then $X$ is geometrically connected. In this case,  $P_0(X,t) = (1-t) = P_0(X',t)$. \qed
\end{remark} 

\begin{proof}  (of Theorem \ref{weil}).

(a) For any $\ell \neq p$, the $\ell$-adic Tate module $T_{\ell}B$ of $B$ is naturally a $G$-module. One has  $$T_{\ell}B \simeq \text{Ind}~_H^G ~T_{\ell}A,$$ the induced representation of $G$ attached to the $H$-representation $T_{\ell}A$. This proves (i). Note that the identity $$V_m[1] = V_m (1-t)^{-1} = (1-t^m)^{-1} \qquad \text{(Frobenius reciprocity)} $$ actually calculates the characteristic polynomial of a generator on a representation of a cyclic group of order $m$ induced from the trivial representation of the trivial group. Because of the relation $h = g^m$ between the topological generators of $H$ and $G$, descent from $k'$ to $k$ or going from a $H$-reprsentation to a $G$-representation is like  extracting a $m$'th root. This is literally true, as every $\beta_r^m$ is an $\alpha_j$. Compare with the discussion of $V_m$ in \cite[p. 252]{grayson}, recalled in Remark \ref{almkvist}. 

(b)  This follows from the theory of the Albanese (and Picard) variety of smooth projective varieties. For any smooth projective variety $Y$ over $k$, the Tate modules of the Albanese variety $Alb(Y)$ and Picard variety $Pic^0_V$ are related to the cohomology of $Y$: (canonical isomorphisms of $G$-modules)
 $$T_{\ell}~Pic^0_Y \simeq H^1_{et}(\bar{Y}, \mathbb Z_{\ell}(1)), \qquad T_{\ell}Alb(Y) \simeq H^{2\text{dim}~Y -1}(\bar{Y}, \mathbb Z_{\ell}(\text{dim}~Y)).$$ 
 If $A'$ is the Albanese variety of $X'$, then $A$ is the Albanese variety of $X$. This follows from the functoriality of the Weil restriction. Similarly, for the Picard varieties which are the duals of $A'$ and $A$. Now (b) follows from (a) and the first canonical isomorphism above. The second canonical isomorphism, combined with (b), provides a relation between
 $P_{2\text{dim}~X' -1}(X',t)$  and $P_{2\text{dim}~X -1}(X,t)$. 

(c) Fix an integer $i$ with $0 \le i \le 2{\rm dim}~X'$. Write $h^i$ for $H^i_c(\bar{X'},\mathbb Q_{\ell})$.  Now, for $i >0$, in the Kunneth decomposition, consider the subspace $U_i \subset H^i_c(\bar{X}, \mathbb Q_{\ell})$ defined as
$$ U_i = \oplus_{j=1}^m (h^0 \otimes h^0\cdots \otimes  \underbrace{h^i}_\text{j'th component} \otimes \cdots \otimes h^0).$$
The subspace $U_i$ is a sub-$G$-representation (in fact, it is $\text{Ind}_H^G h^i$), with characteristic polynomial equal to $V_mP_i(X',t)$. This proves the required divisibility. In fact,  $P_i(X,t) = V_mP_i(X',t)$  if and only if  $U_i =  H^i_c(\bar{X}, \mathbb Q_{\ell})$.

The relation (\ref{weil2}) follows from the identity $X \times_k k' = (X')^m$ and Theorem \ref{main}. 

Finally, we turn to  the proof of (\ref{reln}). One has
\begin{align} X(\mathbb F_{q^r}) &= X'(k' \otimes_k \mathbb F_{q^r})\nonumber \\ 
&= X'(\mathbb F_{q^m} \otimes_k \mathbb F_{q^r})\nonumber \\
& = X'( \mathbb F_{q^{ms}})^d 
\end{align} 
where the first two equalities are by definition and the third by elementary Galois theory.\end{proof}

\section{Motivic measures}\label{mm} 
 
 \subsection*{Motivic measures}   \cite[\S 1]{kapranov}, \cite{MR1886763, hales, bourqui, mustata}.

Consider the category $Sch_F$ of schemes of finite type over a field $F$. For any commutative ring $R$, a motivic measure on $Sch_F$ (with values in $R$) \cite[1.1]{kapranov} is a function $\mu$ which attaches to any scheme $X$ over $F$ an element $\mu(X) \in R$. The function $\mu$ satisfies the following conditions
\begin{enumerate}
\item $\mu(X) = \mu(Y) + \mu(X-Y)$ for any closed subscheme $Y$ of $X$.
\item $\mu(X) = \mu(X_{red})$.
\item $\mu(X \times Y) = \mu(X).\mu(Y)$.
\end{enumerate}
Thus, a motivic measure on $Sch_F$ with values in $R$ is a ring homomorphism $GK_F \to R$. A weak motivic measure on $Sch_F$ with values in $R$ is a ring homomorphism $GK_F' \to R$. Any motivic measure is a weak motivic measure because of the canonical quotient map $GK'_F \to GK_F$. A weak motivic measure $\mu$ satisfies properties (1) and (3) of a measure and a weak version of (2), namely, it is additive on disjoint unions: $\mu(X \amalg Y) = \mu(X) +\mu(Y)$.  Motivic measures are invariants of algebraic varieties that behave like Euler characteristics. 

Examples: 
\begin{itemize} 
\item (the simplest measure) The dimension of an algebraic variety gives a motivic measure with values in the integral tropical ring $\mathbb T\mathbb Z$ (this is the set $\mathbb Z \cup \infty$, with addition law $+_T$ given by maximum: $a +_T b = \text{max}(a,b)$, and multiplication $*_T$ given by the usual sum: $a*_Tb = a+b$.). 
\item  The topological Euler characteristic (for cohomology with compact support) provides a measure $\chi: GK_{\mathbb C} \to \mathbb Z$. 
\item The (graded) Poincar\'e polynomial $P(X,z) = \sum_{i \ge 0} (-1)^i b_i(X) z^i$ (encoding the Betti numbers $b_i(X) = {\rm dim}_{\mathbb Q}~H^i_c(X(\mathbb C), \mathbb Q)$ of a complex algebraic scheme $X$ of finite type) gives a weak motivic measure $P: GK'_{\mathbb C} \to \mathbb Z[z]$. It is not a motivic measure on $GK_{\mathbb C}$ as it does not satisfy property (2). 
\item Theorem \ref{main} says that $X \mapsto Z(X,t)$ gives rise to a motivic measure $Z: GK_{\mathbb F_q} \to W(\mathbb Z)$.  
\end{itemize}

The classical definition of $Z(X,t)$ for schemes over finite fields was generalized by Kapranov \cite{kapranov} to schemes over a general field $F$. Fix a motivic measure $\mu:GK_F \to R$. For a quasi-projective variety $X$ over $F$, he defined the $\mu$-zeta function of $X$ as  
\begin{equation}\label{kzeta} 
\zeta_{\mu}(X,t) =  \sum_{n\ge 0} ~\mu(X^{(n)}) ~t^n \in 1 + tR[[t]],\end{equation} 
where $X^{(n)}$ is the $n$'th symmetric product of $X$. For  the measure $\chi$ on $GK_{\mathbb C}$, the associated zeta function of a point is  $$u_{\chi}(\text{point}, t) = \frac{1}{(1-t)} = [1] \in W(\mathbb Z).$$ 

Given a measure $\mu$ on $GK_F$, write $L = \mu(\mathbb A^1)$. As $(\mathbb A^1)^{(n)} = \mathbb A^n$ and $\mu(\mathbb A^n) = L^n$, one finds $$\zeta_{\mu}(\mathbb A^1, t) = \sum_{n=0}^{\infty} \mu(\mathbb A^n)~t^n = 1 + Lt + L^2t^2 + \cdots = \frac{1}{1-Lt} = [L] \in W(R).$$
The universal motivic measure on $Sch_F$ corresponding to the identity map on $GK_F$ gives rise to Kapranov's motivic zeta function of a quasi-projective scheme $X$ over $F$: \begin{equation}\label{kzetau}\zeta_u(X,t) =\sum_{n=0}^{\infty}~ [X^{(n)}]~t^n \in GK_F[[t]],\end{equation} where $[X]$ indicates the class of $X$ in $GK_F$. One can view $\zeta_u(X,t) \in W(GK_F)$. 

\begin{lemma}\label{measure} Let $F =\mathbb F_q$. 

(i)  the assignment $V \to \#V(\mathbb F_q)$ gives a measure $\mu_0$ on $GK_{\mathbb F_q}$ with values in $\mathbb Z$; 

(ii) the associated zeta function $\zeta_{\mu_0}(X, t)$ is the usual zeta function $Z(X,t)$ of $X$.
\end{lemma}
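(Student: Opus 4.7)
The plan is to prove (i) by directly verifying the three axioms defining a motivic measure, and to deduce (ii) from the classical identification of $\mathbb F_q$-points of symmetric products with effective zero-cycles, combined with the generating-function expression (\ref{zero}) for $Z(X,t)$ that has already been recorded in the preliminaries.

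For (i), each axiom reduces to a property of the functor $V\mapsto V(\mathbb F_q)$. If $Y\subset X$ is closed with open complement $U$, every $\mathbb F_q$-valued point factors through exactly one of $Y$ or $U$ (since ${\rm Spec}~\mathbb F_q$ is a single point), so $\#X(\mathbb F_q) = \#Y(\mathbb F_q) + \#U(\mathbb F_q)$. Since ${\rm Spec}~\mathbb F_q$ is reduced, any morphism ${\rm Spec}~\mathbb F_q\to X$ factors uniquely through $X_{red}$, yielding $\mu_0(X)=\mu_0(X_{red})$. Finally, the universal property of the fibered product gives the bijection $(X\times_{\mathbb F_q}Y)(\mathbb F_q) = X(\mathbb F_q)\times Y(\mathbb F_q)$, hence multiplicativity.

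For (ii), the key step is the bijection
$$ X^{(n)}(\mathbb F_q) \;\longleftrightarrow\; \{\text{effective zero-cycles of degree } n \text{ on } X\}.$$
Geometrically, $X^{(n)}(\bar{\mathbb F}_q)$ is the set of unordered $n$-tuples of points in $X(\bar{\mathbb F}_q)$, and an $\mathbb F_q$-point is such a tuple stable under the Frobenius action. Decomposing a Frobenius-stable unordered $n$-tuple into Frobenius orbits, the orbits correspond to closed points $x\in|X|$, the orbit sizes are the degrees ${\rm deg}(x)$, and the data of orbits with multiplicities $n_i$ satisfying $\sum n_i\,{\rm deg}(x_i)=n$ assemble to an effective zero-cycle $Y=\sum n_i x_i$ with ${\rm deg}(Y)=n$. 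Granting this bijection, the generating-function formula (\ref{zero}) immediately yields
$$ Z(X,t) \;=\; \sum_{Y} t^{{\rm deg}(Y)} \;=\; \sum_{n\ge 0} \#X^{(n)}(\mathbb F_q)\,t^n \;=\; \zeta_{\mu_0}(X,t). $$

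The main (and essentially only) obstacle is to justify the bijection above at the level of schemes rather than merely of underlying sets. Away from the large diagonal, $X^{(n)}$ is \'etale-locally $X^n/S_n$ in the naive sense, and the identification of $\mathbb F_q$-points with Frobenius-stable unordered tuples is transparent. The subtlety is at points where several entries of a tuple coincide, where one must use the scheme-theoretic GIT quotient $X^{(n)}=X^n/S_n$ (available since $X$ is quasi-projective) and check that the quotient is compatible with taking $\mathbb F_q$-points. This identification is classical and standard, so the real content of (ii) is bookkeeping plus the combinatorial expansion of $Z(X,t)$ already in (\ref{zero}).
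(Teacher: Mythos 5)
Your proof is correct and follows essentially the same route as the paper: part (i) is a routine verification (the paper just writes ``clear''), and part (ii) rests on the same key identification of $X^{(n)}(\mathbb F_q)$ with effective degree-$n$ zero-cycles on $X$, combined with the expansion (\ref{zero}) of $Z(X,t)$ as a generating function over effective zero-cycles. The paper cites G\"ottsche and Musta\c{t}\v{a} for that identification rather than sketching the Frobenius-orbit argument as you do, but the substance is identical.
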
 

\begin{proof} (i) clear

(ii) It suffices to show this for $X$ a quasi-projective variety over Spec~$\mathbb F_q$. We recall the proof of this well known result from \cite[p.196]{lgott}; see also \cite{mustata}. Over an algebraic closure $\bar{\mathbb F}_{q}$, the  symmetric product $\bar{X}^{(n)} = \bar{X^{(n)}}$ parametrizes effective zero cycles on $\bar{X}$.  Rational points $X^{(n)}(\mathbb F_q)$ of the $n$'th symmetric product $X^{(n)}$ correspond to effective zero cycles of degree $n$ on $X$. Now use (\ref{zero}). \end{proof} 

\begin{remark}  (i) For any quasi-projective variety $X$ over Spec~$\mathbb F_q$, one has $$Z(X,t) = \sum_{n=0}^{\infty} \#X^{(n)}(\mathbb F_q)~{t^n}.$$ 
(ii) (Parson) A simple linear-algebra analog of (i) is provided by the following.  Let $\Psi: U\to U$ be an endomorphism of a finite dimensional vector space $U$. Then \begin{equation}\label{symext} \frac{1}{\text{det}~(1-t\Psi|U)} = \sum_{n\ge 0} \text{Trace}~(\Psi~| \text{Sym}^n U) t^n.\qed
\end{equation} 
\end{remark}

\subsection*{Exponentiation of measures} 

An interesting feature of Lemma \ref{measure} is that the measure $\mu_0: GK_{\mathbb F_q} \to \mathbb Z$ gives rise to another motivic measure, namely, $Z:GK_{\mathbb F_q} \to W(\mathbb Z)$.  The Kapranov zeta function (\ref{kzetau}) is analogous to exponentiation - the product $X^n$ is analogous to $x^n$, the symmetric product $X^{(n)}$ is analogous to dividing by the term  ${n!} =$ (the size
of the symmetric group $S_n$) in the exponential function $$e^x = \sum_{n=0}^{\infty} \frac{x^n}{n!}.$$  
So the measure $Z$ corresponding to the usual zeta function $Z(X,t)$ is an "exponential" of the counting measure $\mu_0$ on $GK_{\mathbb F_q}$. 

This raises the natural question: \emph{can every (weak) motivic measure  be "exponentiated" to a (weak) motivic measure? More precisely, for any measure $\mu: GK_F \to R$, is the map $$\zeta_{\mu}: GK_F \to W(R) \qquad X \mapsto \zeta_{\mu}(X,t)$$ a ring homomorphism?  Does $\zeta_{\mu}$ give a motivic measure with values in $W(R)$? }

 The issue of exponentiation is really about compatibility of $\zeta_{\mu}$ with products as indicated by the following result (see \cite[Lemma 7.29]{mustata} for a proof; the statement has to be slightly modified if $F$ has positive characteristic. We will only need the case $F = \mathbb C$.)  
\begin{lemma}\label{add} The map $$\zeta_{\mu}: GK_F \to W(R), \qquad \qquad X \mapsto \zeta_{\mu}(X,t)$$ is a group homomorphism; for any closed subscheme $Y$ of $X$, one has $\zeta_{\mu}(X) = \zeta_{\mu}(X-Y).\zeta_{\mu}(Y)$.
\end{lemma}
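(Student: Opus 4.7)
The plan is to reduce the lemma to the single scheme-theoretic identity
$$[X^{(n)}] = \sum_{i+j=n}[U^{(i)}]\cdot[Y^{(j)}]$$
in $GK_F$, where $U := X-Y$, apply $\mu$, and recognize the resulting Cauchy product of formal power series as the group law in $(W(R),+) = (1+tR[[t]],\times)$. Since the scissor relation $[X] = [Y] + [X-Y]$ (together with $[X]=[X_{red}]$) generates the defining relations of $GK_F$ and the group law on $W(R)$ is written multiplicatively in the $\Lambda(R)$ picture, once I prove the multiplicative identity $\zeta_\mu(X,t) = \zeta_\mu(X-Y,t)\cdot\zeta_\mu(Y,t)$ the group-homomorphism claim follows immediately.

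The geometric step is the heart of the proof. I would assume $X$ quasi-projective (enough since such $X$ additively generate $GK_F$) and stratify $X^n$ according to which of the $n$ factors lie in the open complement $U$ and which lie in the closed subscheme $Y$. For each subset $S \subset \{1,\dots,n\}$ of size $i$, the locally closed piece where precisely the factors indexed by $S$ land in $U$ is isomorphic to $U^i \times Y^j$ with $j = n-i$. The symmetric group $S_n$ permutes these $\binom{n}{i}$ pieces transitively with stabilizer $S_i \times S_j$, so passing to the $S_n$-quotient identifies their union modulo $S_n$ with $(U^i \times Y^j)/(S_i \times S_j) = U^{(i)} \times Y^{(j)}$. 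The resulting stratification of $X^{(n)}$ by locally closed subschemes $U^{(i)} \times Y^{(j)}$, $i+j=n$, yields the displayed identity in $GK_F$ via the scissor relations.

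Applying the motivic measure $\mu$ (additive on scissor relations and multiplicative on products) gives $\mu(X^{(n)}) = \sum_{i+j=n} \mu(U^{(i)})\,\mu(Y^{(j)})$, so summing over $n$ produces the Cauchy product
$$\zeta_\mu(X,t) = \sum_n \mu(X^{(n)})\,t^n = \Bigl(\sum_i \mu(U^{(i)})\,t^i\Bigr)\Bigl(\sum_j \mu(Y^{(j)})\,t^j\Bigr) = \zeta_\mu(U,t)\cdot \zeta_\mu(Y,t),$$
which is precisely the desired relation and exhibits $\zeta_\mu$ as additive for the scissor generators of $GK_F$, hence as a group homomorphism on all of $GK_F$.

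The main obstacle is the stratification-and-quotient step. Over $F=\mathbb{C}$ (the case of interest here) the $S_n$-quotients of reduced quasi-projective schemes are themselves well-behaved and the identification of strata of $X^{(n)}$ with $U^{(i)}\times Y^{(j)}$ as locally closed subschemes is clean. In positive characteristic one has to be more careful about inseparability and possible non-reduced structure on the quotients, which is presumably the source of the remark that the statement needs slight modification outside characteristic zero; the Cauchy-product manipulation itself is purely formal once the $GK_F$ identity is in hand.
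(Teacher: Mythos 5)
The paper does not include its own proof of Lemma \ref{add}; it explicitly defers to Mustata's notes (the citation immediately preceding the lemma points to Lemma 7.29 there), with the remark that the statement needs slight modification in positive characteristic and that only the case $F=\mathbb C$ is needed. Your proof is the standard stratification argument that Mustata gives: stratify $X^n$ by which coordinates land in the open $U=X-Y$ versus the closed $Y$, observe the $S_n$-action is transitive on strata of a given type with stabilizer $S_i\times S_j$, pass to the quotient to stratify $X^{(n)}$ by $U^{(i)}\times Y^{(j)}$, derive $[X^{(n)}]=\sum_{i+j=n}[U^{(i)}][Y^{(j)}]$ in $GK_F$, apply $\mu$, and recognize the Cauchy product as addition in $W(R)=\Lambda(R)$. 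This is correct and matches the referenced source's approach; you also correctly flag the quasi-projectivity reduction (needed for $X^{(n)}$ to exist as a scheme) and the positive-characteristic caveat about the quotient identification, which is precisely why the paper restricts to $F=\mathbb C$.
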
 

\subsection*{Macdonald's formula and exponentiation} 
 
It turns out that the motivic measure $\chi$ on $GK_{\mathbb C}$ and the weak motivic measure $P$ on $GK'_{\mathbb C}$ can be exponentiated; this follows by an application of classical formulas due to I.G.~Macdonald.

For any scheme $X$ of finite type over Spec~$\mathbb C$, the graded Poincar\'e polynomial $$P(X,z) = \sum_i (-1)^i b_i(X) z^i \in R =\mathbb Z[z]$$ encodes the Betti numbers $b_i(X) = {\rm dim}_{\mathbb Q}H^i_c(X(\mathbb C), \mathbb Q)$ for cohomology with compact support; note $\chi(X) = P(X, 1)$. 
Fix a quasi-projective variety $X$ of dimension $n$ over $\mathbb C$. Recall the classical formulas due to Macdonald \cite{MR0143204, MR1382733, MR2252764, MR2891866} which show that the (graded) Poincare polynomial $P(X,z) \in \mathbb Z[z]$ and the Euler characteristic $\chi(X)$ of $X$ determine those of the symmetric products $X^{(n)}$: 
\begin{align} \label{mac2}
\sum_{n=0}^{\infty} \chi(X^{(n)}) t^n  =  (1-t)^{-\chi(X)}= {\rm exp}\bigg(\sum_{r >0} \chi(X)~\frac{t^r}{r}\bigg), \end{align}
\begin{align}\label{mac3}
\sum_{n =0}^{\infty} P(X^{(n)})t^n & =  \frac{(1-zt)^{b_1(X)} (1-z^3t)^{b_3(X)} \cdots (1-z^{2n -1}t)^{b_{2n-1}(X)}}{(1-t)^{b_0(X)} (1-z^2t)^{b_2(X)} \cdots (1-z^{2n}t)^{b_{2n}(X)}}\nonumber\\ & = \prod_{j=0}^{j=2n} (1- z^j t)^{(-1)^{j+1} b_j(X)} = {\rm exp}\bigg(\sum_{r >0} P(X, z^r)~\frac{t^r}{r}\bigg). \end{align}
 
 \begin{theorem}\label{macdonald}  (i)  The motivic measure $$\chi: GK_{\mathbb C} \to \mathbb Z \qquad X \mapsto \chi(X)$$ exponentiates to a measure $$\zeta_{\chi}: GK_{\mathbb C} \to W(\mathbb Z). $$ 
 (ii)   The weak motivic measure $P: GK'_{\mathbb C} \to R = \mathbb Z[z]$ exponentiates to a weak motivic measure $$\zeta_P: GK'_{\mathbb C} \to W(R).$$ In particular, one has $$\zeta_P(X \times Y, t) = \zeta_P(X,t) *\zeta_P(Y,t).$$ \end{theorem}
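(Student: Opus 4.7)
The plan is to reduce both (i) and (ii) to componentwise identities via the ghost map $gh:W(R)\to R^{\mathbb N}$, which the paper has shown is an injective ring homomorphism. Lemma~\ref{add} already tells us that $\zeta_\chi$ and $\zeta_P$ are group homomorphisms on $GK_{\mathbb C}$ and $GK'_{\mathbb C}$ respectively, so what remains in each case is multiplicativity under Cartesian product, namely $\zeta_\chi(X\times Y,t) = \zeta_\chi(X,t)*\zeta_\chi(Y,t)$ and likewise for $\zeta_P$. Once this is established, the measure axioms are complete.

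The first step is to apply Macdonald's formulas \eqref{mac2} and \eqref{mac3} to put $\zeta_\chi(X,t)$ and $\zeta_P(X,t)$ in the exponential form $\exp\bigl(\sum_{r\ge 1} c_r\,t^r/r\bigr)$ that appears in Lemma~\ref{ghosty}. That lemma then reads off the ghost coordinates directly: for $\zeta_\chi(X,t)\in W(\mathbb Z)$ they form the constant sequence $(\chi(X),\chi(X),\chi(X),\dots)$, and for $\zeta_P(X,t)\in W(\mathbb Z[z])$ they are $(P(X,z), P(X,z^2), P(X,z^3),\dots)$.

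The second step is Künneth for cohomology with compact support, which gives the multiplicativity $P(X\times Y, w) = P(X,w)\,P(Y,w)$ as polynomials in any indeterminate $w$. Setting $w=1$ recovers $\chi(X\times Y) = \chi(X)\chi(Y)$, and setting $w=z^n$ yields $P(X\times Y, z^n) = P(X,z^n)\,P(Y,z^n)$ for every $n\ge 1$. Because the ghost map sends the Witt product $*$ to componentwise product in $R^{\mathbb N}$, these identities say precisely that $\zeta_\chi(X\times Y,t)$ and $\zeta_\chi(X,t)*\zeta_\chi(Y,t)$ (respectively the $\zeta_P$-analogues) have the same ghost sequence. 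Injectivity of $gh$ then yields the desired equalities in $W(\mathbb Z)$ and in $W(\mathbb Z[z])$, establishing (i) and (ii).

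The step requiring the most care, and the main "obstacle" in the loose sense, is invoking Macdonald's identities: these are proved for quasi-projective $X$, so one first verifies the ring-homomorphism property on the classes of quasi-projective varieties, which additively generate $GK_{\mathbb C}$ and $GK'_{\mathbb C}$, and then extends to arbitrary classes via the additivity furnished by Lemma~\ref{add}. Beyond this bookkeeping, the proof is a direct computation with ghost coordinates, with Macdonald's formulas supplying all the nontrivial geometric input.
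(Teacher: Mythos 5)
Your proposal is correct, but the route differs from the paper's. You reduce everything to ghost coordinates: after invoking Macdonald's formulas \eqref{mac2} and \eqref{mac3} to put $\zeta_\chi(X,t)$ and $\zeta_P(X,t)$ into the exponential form of Lemma~\ref{ghosty}, you read off the ghost sequences $(\chi(X),\chi(X),\dots)$ and $(P(X,z),P(X,z^2),\dots)$, apply the K\"unneth identity $P(X\times Y,w)=P(X,w)P(Y,w)$ at $w=z^n$, and conclude by injectivity of the ghost ring homomorphism on $W(\mathbb Z)$ and $W(\mathbb Z[z])$ (which holds here since both rings are torsion-free, exactly as the paper asserts). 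The paper instead computes directly inside the Witt ring: from \eqref{mac2} it writes $\zeta_\chi(X,t)=\chi(X)[1]$ and uses $[1]*[1]=[1]$; from the product form of \eqref{mac3} it writes $\zeta_P(X,t)={}^W\!\sum_i(-1)^i b_i(X)[z^i]$ and multiplies out using $[z^i]*[z^j]=[z^{i+j}]$ together with the K\"unneth convolution $b_k(X\times Y)=\sum_{i+j=k}b_i(X)b_j(Y)$. In effect, your argument is the analogue of the paper's first (ghost-map) proof of Theorem~\ref{main}(i), while the paper's own argument for Theorem~\ref{macdonald} is the analogue of its second (Teichm\"uller/eigenvalue) proof. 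Both are valid; the ghost-map route is slightly more uniform (the same three lines dispose of (i) and (ii)), while the paper's route stays inside the Witt ring and makes the structure $\zeta_P(X,t)={}^W\!\sum_i(-1)^i b_i(X)[z^i]$ visibly an Euler characteristic, which is more in keeping with the paper's theme. Your remark about first establishing the identity for quasi-projective $X$ and then extending by the additivity of Lemma~\ref{add} is a sensible bookkeeping point the paper leaves implicit.
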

 \begin{proof} By Lemma \ref{add}, it suffices to prove 
 $\zeta_{\chi}(X \times Y) = \zeta_{\chi}(X)*\zeta_{\chi}(Y)$ and $\zeta_P(X \times Y) = \zeta_P(X)*\zeta_P(Y)$. 
  
  (i) the identity (\ref{mac2}) reads in $W(\mathbb Z)$ as $$\zeta_{\chi}(X,t) = \sum_{n=0}^{\infty} \chi(X^{(n)}) t^n = \chi(X)[1].$$Now (i) follows from $$\zeta_{\chi}(X \times Y, t) = \chi(X \times Y)[1] = \chi(X)\chi(Y)[1] = \chi(X)[1]*\chi(Y)[1] =  \zeta_{\chi}(X)*\zeta_{\chi}(Y).$$
 
(ii) Write $R=\mathbb Z[z]$.  The motivic zeta function $$\zeta_P(X,t) = \sum_{n =0}^{\infty} P(X^{(n)})t^n \in 1 + R[[t]]$$ can be rewritten using (\ref{mac3}) as \begin{equation}\label{poin} \zeta_P(X,t)  = ^W\sum_{i=0}^{{2 \rm dim}~X} (-1)^i b_i(X) [z^{i}]  \in W(R).\end{equation}  
Since $P(X \times Y) = P(X). P(Y)$ (K\'unneth), we have $$b_k(X \times Y) = \sum_{ i=0}^{i=k} b_i(X).b_{k-i}(Y);$$ using this, we compute 
  \begin{align*}
 \zeta_P(X,t) *\zeta_P(Y,t) & =  (^W\sum_i(-1)^i b_i(X) [z^i]) *  (^W\sum_j(-1)^j b_j(Y) [z^j])\\
& = ^W\sum_{i+j} (-1)^{i+j} b_i(X)b_j(Y) [z^i]*[z^j]\\  & = ^W\sum_k (-1)^k b_k (X \times Y) [z^k]\\ & = \zeta_P(X \times Y,t).  
   \end{align*} \end{proof}

Note that the measure $\chi$ and $\zeta_{\chi}$ are obtained from $P$ and $\zeta_P$ via the map $$R =\mathbb Z[z] \to \mathbb Z \qquad u \mapsto 1.$$

\begin{remark}  (i) The K\"unneth theorem is the main ingredient in the previous proof; it also plays a crucial part in the works \cite{MR1382733, MR2252764, MR2891866}  which prove generalizations of the above Macdonald formulas for various characteristic numbers and other cohomological invariants. The multiplicativity in Theorem \ref{macdonald} also holds for these generalizations in the Witt ring over an appropriate coefficient ring.  

(ii) (Parson) We say that a Macdonald formula exists for a measure $\mu:GK_F \to R$ if  $\zeta_{\mu}(X)$ can be calculated in terms of $X$. For any measure, the existence of a Macdonald formula implies (but is not implied by)  exponentiation. We used the existence in Theorem \ref{macdonald} to prove exponentiation. Lemma \ref{measure} shows that the counting measure $\mu_0$ can be exponentiated, but there is no Macdonald formula for $\mu_0$: the zeta function $\zeta_{\mu_0}(X,t) = Z(X,t)$, in general, is not entirely determined by $\*X(\mathbb F_q)$ alone.\qed\end{remark}  

\subsection*{Zeta functions, $\lambda$-rings, power structures}  \cite{MR1996804, MR2252764, MR2377891, MR2891866, MR3065021, gorsky, bourqui} 

From the viewpoint of $\lambda$-rings, the zeta function of a variety over a finite field is defined in terms of symmetric  powers (Lemma \ref{measure}) whereas the cohomological interpretation (\ref{coh-d}, \ref{p-eye})  is in terms of exterior powers: the coefficients of the characteristic polynomial are the traces on the exterior powers.  Thus, these two have to do with opposite $\lambda$-ring structures; the nomenclature "opposite"  (\ref{oppo}) comes from the two sides of (\ref{symext}) which concern opposite $\lambda$-structures. 

Let $F$ be a field of characteristic zero. Given any measure $\mu:GK_F \to R$, the map $\zeta_{\mu}: GK_F \to \Lambda(R)$ given by the Kapranov zeta function (\ref{kzeta}) factorizes as $$GK_F \xrightarrow{\mu}  R \xrightarrow{{\hat{\zeta}_{\mu}}} \Lambda(R).$$ Lemma \ref{add} shows that $\hat{\zeta}_{\mu}$ is a homomorphism of groups and hence that the pair $(R, {\hat{\zeta}_{\mu}})$ is a pre-$\lambda$ ring. For $\mu$ the identity map on $GK_F$, we get that  $(GK_F, \zeta_u)$ is a pre-$\lambda$ ring; concretely,  the associated pre-$\lambda$ structure is defined by $\lambda^r([X]) = [X^{(r)}]$ for any quasi-projective scheme $X$. In fact, there are at least four different pre-$\lambda$ ring structures on $GK_F$ \cite[p.526]{MR3065021}. 
\footnote{The right pre-$\lambda$ structure on $GK_F$ for fields $F$ of positive characteristic is due to T.~Ekedahl \cite[p.2]{MR3065021}.}     
 
Whether $GK_F$ is a $\lambda$-ring becomes the question whether the universal measure can be exponentiated.  
It is not known whether $(GK_F, \zeta_u)$ is (not) a $\lambda$-ring in general. But the question of exponentiation could be phrased with respect to any pre-$\lambda$ structure on $GK_F$. In any case, there are motivic measures with values in $\lambda$-rings, for instance, the motivic measure with values in Chow motives \cite{MR2377891}.  Also, note that certain subrings of $GK_F$ are $\lambda$-rings; for instance, the pre-$\lambda$-subring generated by $[\mathbb A^1]$ is a $\lambda$ ring \cite[Example, p.310]{gorsky}. 

If one wishes to prove existence of a Macdonald formula (\ref{mac3})  for a general measure $GK_F \to R$, one encounters an immediate obstacle: how to make sense of symbols such as $(1-zt)^a$ for elements $a\in R$? In the above cases, $R =\mathbb Z$ and so this is not an issue. However, for general rings $R$, one needs a "power structure"  \cite{MR2252764, MR2891866, MR3065021}. 

\begin{definition}\label{power}  A power structure on a ring $R$ with identity is a map  \cite[p.526]{MR3065021} $$(1 +tR[[t]]) \times R \to 1 +tR[[t]]: (P(t), r) \mapsto (P(t))^r,$$ satisfying \begin{enumerate} 
\item $P(t)^0 =1$.
\item $(P(t))^1 = P(t)$.
\item $(P(t)Q(t))^r =(P(t))^r(Q(t))^r$.
\item $(P(t))^{ r+s} = (P(t))^r .(P(t))^s$.
\item $(P(t)^{rs} = ((P(t))^s)^r$.
 \end{enumerate}
\end{definition} 
\begin{remark} (Reinterpretation of power structures) Consider the ring ${\rm End}(\Lambda (R))$ of endomorphisms of the abelian group $\Lambda(R) = 1 +tR[[t]]$, recall that the group law is multiplication of power series; there is a natural map $$\iota: \mathbb Z \to  {\rm End}(\Lambda (R))$$ where $\iota(n)$ is the multiplication by $n$ map $P \mapsto P^n$ on $\Lambda(R)$.  We also have the Verschiebung maps $V_n \in {\rm End}(\Lambda (R))$ for $n \in \mathbb N$. 

\begin{definition}\label{powe2}  A power structure on $R$ is an extension of $\iota$  to a ring homomorphism $$ j: R \to {\rm End}(\Lambda (R)), \qquad j(r)P = P^r.$$\end{definition} 
While both definitions are equivalent, we believe Definition  \ref{powe2} to be more transparent and suggestive than Definition \ref{power}. For instance, since ${\rm End}(\Lambda (R))$  is non-commutative, one has the (conjugation) action of ${\rm Aut}(\Lambda (R))$ on the set of power structures on $R$. Namely, given a power structure $j$, the map $j_{\gamma}: R \to {\rm End}(\Lambda (R))$  defined by $j_{\gamma}(r) = (\gamma \circ j \circ \gamma^{-1})(r)$ is a ring homomorphism for each $\gamma \in {\rm Aut}(\Lambda (R))$.
Thus, $j_{\gamma}$ is a power structure on $R$. A pre-$\lambda$ ring structure on $R$ can give rise to several different power structures \cite[p. 309]{gorsky}. 

The subtlety of power structures is in the arithmetic (or torsion) of $R$ because when $R$ is a $\mathbb Q$-algebra, the logarithm and exponential functions give rise to a natural power structure \cite[p.307]{gorsky}.

Some natural power structures \cite{MR2252764} also satisfy

 (i) (normalization on the $1$-jets) $(1+t)^r = 1 +rt +$ terms of higher degree and 

 (ii) (commuting with Verschiebung maps) $(P(t^k))^r = (P(t))^r|_{t \mapsto t^k}$ .  

A power structure satisfying these additional properties is said to be finitely determined if, for any $N>0$, there exists $M >0$ such that the $N$-jet of $(P(t))^r$ is determined by the $M$-jet of $P(t)$. Such a structure is determined by the elements $(1-t)^{-r}$ for all $r\in R$ satisfying \begin{equation}\label{add2}(1-t)^{-r-s} = (1-t)^{-r}.(1-t)^{-s}.\end{equation} See \cite{MR2046199} for details. 
\qed
\end{remark}

S.~M.~Gusein-Zade, I.~Luengo and A.~Melle-Hern\'andez \cite{MR2252764} have shown how a pre-$\lambda$ ring structure $\lambda_t$ on $R$ defines a functorial \cite[Proposition 2]{MR2252764} power structure on $R$. 
The pre-$\lambda$ ring structure on $GK_F$ provided by the Kapranov zeta function $\zeta_u$ (\ref{kzetau})  is a finitely determined power structure and thus uniquely determined by the rule $$(1-t)^{-[X]} = \zeta_u(X,t) = \sum_{n=0}^{\infty}~ [X^{(n)}]~t^n \in GK_F[[t]];$$Lemma \ref{add} shows that (\ref{add2}) is satisfied.

 As pointed out in \cite[p.526]{MR3065021}, this pre-$\lambda$ structure on $GK_F$ is preferable to the others as it is defined over the Grothendieck semi-ring $GK_F^{+} \subset GK_F$ consisting of non-negative combintations of elements represented by "genuine" schemes; elements of $GK_F$ are represented by virtual sum of schemes. 

 For any complex  smooth quasi-projective variety $X$ of dimension $d$, let ${\rm Hilb}^n_X$ be the Hilbert scheme parametrizing zero-dimensional subschemes of $X$ of length $n$. Write ${\rm Hilb}^n_{X,x}$ be the subscheme of the Hilbert scheme parametrizing those subschemes supported at a given point $x \in X$.  Write $$H_X(t) = 1 + \sum_{n \ge 1} [{\rm Hilb}^n_X] t^n, \qquad  H_{X,x}(t) = 1 + \sum_{n \ge 1} [{\rm Hilb}^n_{X,x}] t^n \in \Lambda(GK_{\mathbb C}).$$
A proof of the following beautiful result can be found in \cite[Theorem 1]{MR2252764}:
$$H_X(t) = (H_{\mathbb A^d, 0}(t))^{[X]} \in \Lambda(GK_{\mathbb C}).$$
 Further applications and examples (both illustrative and interesting) of power structures can be found  in \cite{MR2252764, gorsky, bourqui}.

 \subsection*{Questions}  
   
   \emph{Does the universal measure exponentiate?} As indicated above, it seems unlikely that the universal motivic measure can be exponentiated: the ring $GK_F$ is not a $\lambda$ ring in general. Also, such an exponentiation would provide a ring homomorphism $GK_F \to W(GK_F)$ splitting the projection $g_1: W(GK_F) \to GK_F$. Such splittings could exist if $GK_F$ were a $\mathbb Q$-algebra. But $GK_{\mathbb F_q}$ is not a $\mathbb Q$-algebra as seen, for instance, by the existence of the counting measure $\mu_0$.  In the likely case that the measure does not exponentiate,  one is led to ask: Is it possible to determine $\zeta_u(X\times Y)$ from $\zeta_u(X)$ and $\zeta_u(Y)$?

\emph{Does the zeta function exponentiate?}  We saw that the measure $\zeta_{\mu_0}: GK_{\mathbb F_q} \to W(\mathbb Z)$ is the map $X \mapsto Z(X,t)$. Does the measure $Z$ exponentiate? Is there a Macdonald formula for $Z$? Namely, does $Z(X,t)$ determine $Z(X^{(n)}, t)$ for all $n >0$? 

\emph{What is the relation between the Witt ring and $\Gamma$-factors?} Consider the zeta functions $\zeta(X)$ and $\zeta(Y)$ of schemes $X$ and $Y$ of finite type over ${\rm Spec}~\mathbb Z$. Taking their product with the corresponding archimedean factors ($\Gamma$-factors) gives the completed zeta functions $\hat{\zeta}(-)$ .  Theorem \ref{main} (i) (applied at all finite primes) indicates the relation between (the non-archimedean local factors of) $\zeta(X)$, $\zeta(Y)$ and $\zeta (X \times Y)$. How about the archimedean factors? Is there an analogue for Theorem \ref{main} (i) for the $\Gamma$-factors? Can one express the $\Gamma$-factors of $X \times Y$ in terms of those of $X$ and $Y$ via a Witt-style product?  Given the description of the local factors (both archimedean and non-archimedean) in terms of regularixed determinents \cite{MR1135468} and the recent work of A.~Connes-C.~Consani \cite{cc} relating this to archimedean cyclic cohomology, it seems likely the K\"unneth theorems \cite{MR920950, MR883882} in periodic and negative cyclic cohomology  provide an analogue of Theorem \ref{main} (i) for the $\Gamma$-factors. 

\emph{What is the natural receptacle for the zeta functions of schemes over ${\rm Spec}~\mathbb Z$?}  This ring would be the global analogue of $W(\mathbb Z)$ (receptacle for the local non-archimedean factors);  the identity element would be $\hat{\zeta}({\rm Spec}~\mathbb Z)$ (the completed Riemann zeta function). Should it be a $\lambda$-ring?  In view of J.~Borger's work \cite{jimb}, it is clear that $\lambda$-rings play a prominent role in global arithmetic.
 Is there a Macdonald formula for $\zeta(X)$? for $\hat{\zeta}(X)$?   
 
 Some interesting results for $\zeta(X)$ (but not $\hat{\zeta}(X)$) have been found by J.~Elliott \cite{je}.

\subsection*{Final remarks} We end by highlighting some unnoticed appearances of the Witt ring. 

Heinloth \cite{MR2377891} has proved rationality results for the motivic zeta function with values in Chow motives for abelian varieties. This involves a particular decomposition of the zeta function $Z_X$ into $P_X$ and $Q_X$. For smooth projective varieties $X$ and $Y$, she shows that if $Z_X$ and $Z_Y$ are both rational and have functional equations, then $Z_{X \times Y}$ is rational and has a functional equation. Her proof of this beautiful result \cite[p.1942]{MR2377891} actually shows that the $P_{X\times Y}$ and $Q_{X\times Y}$ are given by Witt products involving $P_X$, $P_Y$, $Q_X$ and $Q_Y$. 

   If $X$ is a smooth algebraic variety of dimension $d$, the symmetric products are smooth for $d=1$ but not for  $d > 1$.  For surfaces, the Hilbert schemes (which are smooth) are an attractive alternate to the symmetric products.
 
For any smooth projective surface $X$ over $\mathbb F_q$, L.~G\"ottsche \cite{lgott} has shown the invariants of $X$ determine those of the Hilbert scheme $X^{[n]} = {\rm Hilb}^n(X)$.  For any variety $V$ over $\mathbb F_q$, let $e(V)$ denote the Euler characteristic of $V$, computed via $\ell$-adic cohomology. 
One of G\"ottsche's results \cite[Theorem 0.1, Identity (2)]{lgott} can be rewritten as the equality $$\sum_{n \ge 0} e(X^{[n]}) t^n =  e(X) (^W\sum_{n \ge 1}V_n[1]) \in W(\mathbb Z).$$
 
 The results of Macdonald and G\"ottsche inspired K.~Yoshioka's work \cite{y1, y2}. For any smooth projective surface $X$ over $\mathbb F_q$ and a subscheme $Y$ of $X$, Yoshioka \cite{y1} studies the number $N_{n,Y}(\mathbb F_q)$ of pairs $(Z,u)$ where $Z$ is a l.c.i. subscheme of dimension zero in $X$ of degree $n$ with support in $Y$ and $u$ is a unit in $H^0(Z, \mathcal O_Z)$.  He proves \cite[Proposition 0.2]{y1} that the associated zeta function $F_{X,Y}(t) = \sum_{n \ge 0} \#N_{n,Y}(\mathbb F_q)t^n \in 1 +t\mathbb Z[[t]]$ 
 satisfies \begin{equation}\label{yosh1}F_{X,Y}(t) = \prod_{a\ge 1} \frac{Z(Y, q^{2a-1}t^a)}{Z(Y, q^{2a-2}t^a)};\end{equation} 
this is crucial for his beautiful results on the Betti numbers of the moduli space of stable sheaves of rank two on $\mathbb P^2$. 
Using (\ref{affine}), we can rewrite Yoshioka's result above as a convergent infinite sum in $W(\mathbb Z)$:
 $$F_{X,Y}(t) = ^W\sum_{n\ge 1} V_n(Z(Y \times \mathbb A^{2n-2}, t) - Z(Y \times \mathbb A^{2n-1}, t)).$$ 
One hopes that the Witt ring can provide a conceptual explanation of these results.

\subsection*{Acknowledgements.} I would like to sincerely thank S.~Lichtenbaum for his constant support and for initiating this paper. It was he who pointed out to  me long ago that the zeta function $\zeta(X \times Y)$ of a product of varieties (over a finite field) is not the usual product $\zeta(X). \zeta(Y)$ of power series, thereby raising the question of  describing $\zeta(X \times Y)$ in terms of $\zeta (X)$ and $\zeta(Y)$. I heartily thank J.~Borger, C.~Deninger, A.~Gholampour, F.~Heinloth, L.~Hesselholt, J.~Huang, L.~Illusie, S.~Kelly, J.~Milne, J.~Rosenberg, J.~Sch\"urmann, G.~Tabuada and L.~Washington for discussions and inspiration.  This revised version of the paper owes much to a detailed and useful commentary by James Parson.  I would like to express my gratitude to him. Part of the work on this paper was conducted during a stay at the Mathematisches Institut (University of M\"unster); I thank the Institut and C.~Deninger for their kind hospitality.

\end{document}